\def\blfootnote{\xdef\@thefnmark{}\@footnotetext}
\newtheorem{theorem}{Theorem}[section]
\newtheorem{lemma}[theorem]{Lemma}
\newtheorem{proposition}[theorem]{Proposition}
\newtheorem{corollary}[theorem]{Corollary}
\theoremstyle{definition}
\newtheorem*{definition*}{Definition}
\newcommand{\ed}{\end{document}}
\newcommand{\F}{\mathbb{F}}
\newcommand{\N}{\mathbb{N}}
\newcommand{\Q}{\mathbb{Q}}
\newcommand{\f}{\varphi}
\newcommand{\g}{\gamma}
\renewcommand{\geq}{\geqslant}
\renewcommand{\leq}{\leqslant}
\let\leq=\leqslant
\let\geq=\geqslant
\numberwithin{equation}{section}
\begin{document}

\title{Compact groups in which commutators have finite right Engel sinks}

\author{Evgeny Khukhro}
\address{E. I. Khukhro: Charlotte Scott Research Centre for Algebra, University of Lincoln, U.K.}
\email{khukhro@yahoo.co.uk}

\author{Pavel Shumyatsky}

\address{P. Shumyatsky: Department of Mathematics, University of Brasilia, DF~70910-900, Brazil}
\email{pavel@unb.br}

\thanks{The second author was supported by  FAPDF and CNPq.} 
\keywords{Finite groups; profinite group; compact group; commutator; right Engel sink; locally nilpotent}
\subjclass[2020]{20D25, 20E18, 20F45, 22C05}

\begin{abstract} A right Engel sink of an element $g$ of a group $G$ is a subset containing all sufficiently long commutators $[...[[g,x],x],\dots ,x]$. We prove that if $G$ is a compact group in which, for some $k$, every commutator $[...[g_1,g_2],\dots ,g_k]$ has a finite right Engel sink, then $G$ has a locally nilpotent open subgroup. If in addition, for some positive integer $m$, every commutator $[...[g_1,g_2],\dots ,g_k]$ has a right Engel sink of cardinality at most $m$, then $G$ has a locally nilpotent subgroup of finite index bounded in terms of $m$ only.
\end{abstract}

\maketitle

\section{Introduction} We use the simple-commutator notation for left-normed commutators $[a_1,a_2,a_3,\dots, a_k]=[...[[a_1,a_2],a_3,\dots ,a_k]$ and the abbreviated notation for Engel words $[a,{}_nb]=[a,b,b,\dots ,b]$, where $b$ is repeated $n$ times. Any commutator of the form $[a_1,a_2,\dots, a_k]$ is called a \emph{$\g_k$-value} for short.

A \emph{right Engel sink} $R$ of an element $g$ of a group $G$ is a
set containing all sufficiently long commutators $[g,{}_nx]$, that is, for every $x\in G$  there is a positive integer $r(x,g)$ such that $[g,{}_n x]\in  R$ for any $n\geq r(x,g)$.

 We prove that if $G$ is a compact group in which, for some $k$, every $\g_k$-value has a finite right Engel sink, then $G$ has a locally nilpotent open subgroup. (A group is said to be locally nilpotent if every finite subset generates a nilpotent subgroup.) If in addition, for some positive integer $m$, every $\g_k$-value  has a right Engel sink of cardinality at most $m$, then $G$ has a locally nilpotent subgroup of finite index bounded in terms of $m$ only. First the latter  quantitative result is proved for finite groups in Theorem~\ref{t-finite}.  Then it is used in the proof of Theorem~\ref{t-profinite} on profinite groups. Finally Theorem~\ref{t-compact} on compact groups is proved using the profinite case.
 
Groups with restrictions on Engel sinks generalize Engel groups. A group $G$ is called an Engel group if for every $x,g\in G$ the equation $[x, {}_ng]=1$ holds for some $n=n(x,g)$.  Clearly, any locally nilpotent group is an Engel group. The converse is not true in general, as first shown by Golod's examples \cite{gol}, but has been proved in certain classes of groups. Using Zelmanov's deep results on Engel Lie algebras \cite{ze92,ze95,ze17}, Wilson and Zelmanov \cite{wi-ze} proved that any profinite Engel group is locally nilpotent. Later Medvedev \cite{med} extended this result to Engel compact (Hausdorff) groups.

We considered finite, profinite, and compact groups with restrictions on left or right Engel sinks of all elements in \cite{khu-shu153, khu-shu171, khu-shu162, khu-shu172, khu-shu191, khu-shu201}. In particular,  we proved in \cite[Theorem~3.1]{khu-shu172} that if all elements of a finite group $G$ have right Engel sinks of cardinality at most $m$, then $G$ has a normal subgroup of $m$-bounded order with nilpotent quotient; this also implies the existence of a normal nilpotent subgroup of $m$-bounded index.  (We write throughout  ``$m$-bounded'' to  abbreviate ``bounded above in terms of $m$ only''). In this paper, we impose the same condition only on the $\g_k$-values in a finite group~$G$.

\begin{theorem}\label{t-finite}
Let $k$ and $m$ be positive integers. If every $\g_k$-value in a finite group $G$ has a right Engel sink of cardinality at most $m$, then $G$ has a normal nilpotent subgroup of $m$-bounded index.
\end{theorem}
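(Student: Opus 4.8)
The plan is to reduce to soluble groups by means of \cite[Theorem~3.1]{khu-shu172}, and then, for soluble groups, to convert the hypothesis on $\g_k$-values into a bound on the size of a single module orbit, and thereby into a bound on the dimension of an irreducible chief factor.

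Two preliminary observations will be used repeatedly. First, the hypothesis passes to quotients and to normal subgroups: if $c$ is a $\g_k$-value of $H\trianglelefteq G$, then $c$ is a $\g_k$-value of $G$, and $\{[c,{}_nx] : x\in H,\ n\gg 0\}$ is a right Engel sink of $c$ lying in $H$ of size at most $m$. Second, if a finite group $X$ has a normal subgroup $B$ of $m$-bounded order with $X/B$ nilpotent, then $X$ has a normal nilpotent subgroup of $m$-bounded index: the subgroup $C=C_X(B)$ has $m$-bounded index, and $C$ is nilpotent since $Z(B)\le Z(C)$ while $C/Z(B)\cong CB/B$ is nilpotent. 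Now let $R=R(G)$ be the soluble radical. By (the now established) Ore conjecture every element of a finite nonabelian simple group is a commutator, so by an easy induction on $k$ every element of such a group is a $\g_k$-value; combining these expressions coordinatewise, every element of $F^*(G/R)=E(G/R)=S_1\times\dots\times S_t$ — a direct product of nonabelian simple groups, since $F(G/R)=1$ — is a $\g_k$-value of $G/R$, hence has a right Engel sink of size at most $m$ inside $S_1\times\dots\times S_t$. By \cite[Theorem~3.1]{khu-shu172} this perfect, centreless group has $m$-bounded order; since $F(G/R)=1$ its components are simple and $C_{G/R}(F^*(G/R))=1$, so $G/R$ embeds into $\operatorname{Aut}(S_1\times\dots\times S_t)$ and $[G:R]$ is $m$-bounded. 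As $F(R)$ is characteristic in $R\trianglelefteq G$ and nilpotent, it now suffices to prove the theorem for soluble groups: then $F(R)$, and a fortiori $F(G)$, has $m$-bounded index.

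So assume $G$ is soluble; I argue by induction on $|G|$ and may assume $G$ is not nilpotent. Fix a minimal normal subgroup $N$: it is an elementary abelian $p$-group, an irreducible $\F_p[G]$-module, and $N\le F(G)$. If $G/N$ is not nilpotent, then applying the inductive hypothesis to $G/N$ and to a suitable proper normal subgroup of $G$ (the preimage $L$ of $F(G/N)$, whose Fitting subgroup is characteristic in $G$ and nilpotent of $m$-bounded index) yields, by the routine bookkeeping, a normal nilpotent subgroup of $m$-bounded index. The crucial case is that $G/N$ is nilpotent. Then the preimage $P$ of its normal Sylow $p$-subgroup is a normal $p$-subgroup of $G$ with $P\le O_p(G)$; since $[P,N]$ is a $G$-submodule of $N$ and $[P,N]<N$ (as $P$ is a $p$-group), we get $[P,N]=1$, so $P\le C:=C_G(N)$. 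Hence $\bar Q:=G/C$ is a quotient of the $p'$-group $G/P\cong (G/N)/(P/N)$; thus $\bar Q$ is a $p'$-group acting faithfully and irreducibly on $N$, while $C$ is nilpotent (because $C/N\le G/N$ is nilpotent and $N\le Z(C)$). So $F(G)\ge C$ is nilpotent, and it remains to bound $|\bar Q|=[G:C]$.

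This is the heart of the matter. Note $\bar Q\ne1$, for otherwise $N\le Z(G)$ and $G$ would be nilpotent. Fix $\bar q\ne1$ in $\bar Q$; as $\bar Q$ is a $p'$-group some $p'$-element $g\in G$ maps onto $\bar q$, and $\bar q$ acts on $N$ with $N=C_N(\bar q)\oplus[N,\bar q]$ and $\bar q-1$ invertible on $[N,\bar q]\ne0$. Choose $0\ne c\in[N,\bar q]$ and $n\in N$ with $(\bar q-1)^{k-1}n=c$; then $c=[n,{}_{k-1}g]$ is a $\g_k$-value. For any $x\in G$ with image $\bar x\in\bar Q$ one has $[c,{}_jx]=(\bar x-1)^jc$; since $\bar x$ is a $p'$-element, $\bar x-1$ restricts to a bijection of $[N,\bar x]$, so writing $c=c_0+c_1$ along $N=C_N(\bar x)\oplus[N,\bar x]$ the sequence $\big((\bar x-1)^jc\big)_{j\ge1}=\big((\bar x-1)^jc_1\big)_{j\ge1}$ is purely periodic; hence its cycle, which is the set of all sufficiently long $[c,{}_jx]$, contains $(\bar x-1)c_1=(\bar x-1)c$. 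Therefore every right Engel sink of $c$ contains $(\bar x-1)c=\bar xc-c$ for every $x\in G$, i.e.\ it contains $\bar Q c-c$, a set of the same cardinality as the orbit $\bar Q c$. Hence $|\bar Q c|\le m$, so the nonzero $\bar Q$-submodule $\F_p[\bar Q]c$ has dimension at most $m$; by irreducibility $N=\F_p[\bar Q]c$, so $\dim_{\F_p}N\le m$. Since $N=\sum_{\bar q'\in\bar Q}[N,\bar q']$ (a nonzero $\bar Q$-submodule), $N$ has a basis $c_1,\dots,c_d$ with $d\le m$ and each $c_i\in[N,\bar q_i]$, so each $c_i$ is a $\g_k$-value with $|\bar Q c_i|\le m$; as $\bar Q$ acts faithfully it embeds into $\prod_{i=1}^d\operatorname{Sym}(\bar Q c_i)$, whence $|\bar Q|\le (m!)^d\le (m!)^m$ and $[G:F(G)]\le (m!)^m$. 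The main obstacle is exactly this soluble case: the key insight is that the right Engel sink of a $\g_k$-value lying in an abelian normal section must contain a full orbit of the acting $p'$-quotient, forcing such a value to generate a submodule of $m$-bounded dimension; everything else — the reduction to soluble groups, and the descent when $G/N$ is not nilpotent — is comparatively routine.
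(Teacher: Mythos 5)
Your reduction to soluble groups is fine, and your ``crucial case'' is correct and genuinely elegant: the observation that for a $\g_k$-value $c$ in an irreducible module with coprime acting group $\bar Q=G/C_G(N)$ every right Engel sink must contain the translate $\bar Q c-c$ of the full orbit, forcing $\dim N\le m$ and $|\bar Q|\le (m!)^m$, is a nice substitute for part of the paper's analysis. The gap is the sentence claiming that the case where $G/N$ is not nilpotent follows ``by routine bookkeeping'' from the inductive hypothesis. What the bookkeeping actually yields is $[G:F(G)]=[G/N:F(G/N)]\cdot[L:F(L)]$ (indeed $F(L)=F(G)$, since $F(G)\le L$ and $F(L)$ is a nilpotent normal subgroup of $G$), i.e.\ the \emph{product} of the two bounds supplied by induction. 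So if the statement being proved by induction on $|G|$ is ``$[G:F(G)]\le B(m)$'', this branch only returns $B(m)^2$, and the induction does not close; a minimal counterexample is not eliminated. Since this branch can be traversed an unbounded number of times (the recursion depth is controlled by $|G|$ or by the Fitting height, neither of which you have bounded in terms of $m$), no uniform $m$-bounded index comes out of the argument as written.

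Nor can one repair this by simply reducing to $G=F_2(G)$ (a legitimate reduction, since $F_2(G)/F(G)$ contains its centralizer in $G/F(G)$): a soluble group with $G/F(G)$ nilpotent need not possess \emph{any} minimal normal subgroup with nilpotent quotient -- for instance $G=S_3\times D_{10}$ has $G/F(G)\cong C_2\times C_2$, while $G/N$ is non-nilpotent for both minimal normal subgroups $N$. So your crucial case, which needs $N$ irreducible and $G/C_G(N)$ a $p'$-group, is genuinely special and cannot carry the whole theorem without a different reduction. The paper proceeds instead by passing to $G=F_2(G)$, then to a single Sylow $p$-subgroup $P$ of $G/F(G)$ acting coprimely on $F(G)$ with $\Phi(F(G))=1$, and bounds separately the exponent of $P$ (an eigenvalue argument over $\overline{\F}_q$ showing $(X-1)^{m!}-1$ is satisfied) and the rank of $P$ (generalized Maschke plus the trick of multiplying elements $v_i$ from $s$ independent minimal $A$-invariant factors into a single $\g_k$-value $w=v_1\cdots v_s$, so that $s\le m$), concluding with the Lubotzky--Mann theorem. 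Your orbit argument could plausibly replace pieces of that analysis on individual chief factors, but you would still have to bound the \emph{number} of chief factors needed for a faithful action -- exactly what the paper's rank lemma does and what your write-up omits.
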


In contrast to the aforementioned \cite[Theorem~3.1]{khu-shu172}, under the hypotheses of Theorem~\ref{t-finite} for $k\geq 2$ one cannot obtain a normal subgroup of $m$-bounded order with nilpotent quotient:  examples are provided by the semidirect products $T\langle\alpha\rangle$, where $T$ is an elementary abelian $3$-group and $\alpha$ is an automorphism of $T$ acting by taking inverses.

For a compact group $G$ we proved in \cite[Theorem~1.1]{khu-shu172} that if all elements have finite right Engel sinks, then $G$ has a finite normal subgroup with locally nilpotent quotient; this also implies the existence of an open  normal locally nilpotent subgroup. In this paper, we impose the same condition only on the $\g_k$-values in a profinite or compact group~$G$.

\begin{theorem}\label{t-compact}
Let $k$ be a positive integer. If every $\g_k$-value in a compact group $G$ has a finite  right Engel sink, then $G$ has an open normal locally nilpotent subgroup.
\end{theorem}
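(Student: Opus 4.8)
The plan is to reduce to the profinite case and then to pin down the identity component $G^0$. Throughout I would use two elementary closure properties. First, a $\g_k$-value of a closed subgroup $L\le G$ is a $\g_k$-value of $G$, so if its right Engel sink $R$ in $G$ is finite then $R\cap L$ is a finite right Engel sink of the same element inside $L$. Second, the image in a quotient $G/N$ of a $\g_k$-value with finite right Engel sink is again such, and every $\g_k$-value of $G/N$ arises this way. In particular the hypothesis passes to the profinite group $G/G^0$, so Theorem~\ref{t-profinite} supplies an open normal subgroup $H$ of $G$ with $G^0\le H$ and $H/G^0$ locally nilpotent. Since $H$ is open and normal in $G$, it suffices to prove $H$ locally nilpotent, and for that I would show that $G^0$ is abelian and central in $G$: then $G^0\le Z(H)$, $H/G^0$ is locally nilpotent, and a central-by-(locally nilpotent) group is locally nilpotent.

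To see that $G^0$ is abelian, suppose it is not. Then $(G^0)'$, the closed commutator subgroup, is a non-trivial connected compact semisimple group, so by the structure theory of compact groups it contains a closed simple compact Lie subgroup $K$. By the closure properties $K$ inherits the hypothesis; as $K$ is perfect, every element of $K$ is a commutator by Goto's theorem and hence, by a short induction, a $\g_k$-value, so every element of $K$ has a finite right Engel sink. This contradicts the Lie-theoretic fact that a non-abelian compact connected Lie group always has an element with an infinite right Engel sink --- which I would obtain by exhibiting explicitly, for a rotation $x$ of irrational angle and a $g$ in general position, that $[g,{}_nx]$ runs through infinitely many values (or one may appeal to the corresponding analysis in \cite{khu-shu172}). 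So $G^0$ is abelian.

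The crux is then that $G^0$ is central. For $t\in G^0$ and $x\in G$ the element $[t,{}_{k-1}x]$ lies in $G^0$ by normality and is a $\g_k$-value, hence has a finite right Engel sink; applying the sink property of this element to $x$ itself shows that $[t,{}_mx]$ is eventually confined to a finite set, so $\{[t,{}_mx]:m\ge1\}$ is finite. Now dualize: put $A=\widehat{G^0}$, a discrete torsion-free abelian group on which the compact group $G$ acts with only finite orbits; writing $A$ as the union of its finitely generated $G$-invariant subgroups $A_0\cong\mathbb Z^n$, the group $G$ acts on each $A_0$, and hence on the quotient torus $\mathbb T^n=\widehat{A_0}$ of $G^0$, through a finite subgroup of $GL_n(\mathbb Z)$. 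If $\alpha\in GL_n(\mathbb Z)$ is the (finite-order, hence semisimple) matrix by which a given $x$ acts on $\mathbb T^n$ and $\beta=\alpha-1$, then $[s,{}_mx]=\beta^ms$ on $\mathbb T^n$, and finiteness of the orbit for a point $s$ with $\mathbb Q$-linearly independent coordinates forces $\beta^{m_1}=\beta^{m_2}$ for some $m_1<m_2$; hence the eigenvalues of $\beta$ are $0$ or roots of unity and those of $\alpha$ lie in $\{1,e^{\pm i\pi/3}\}$. Applying this to $x^2$ as well gives the same constraint for $\alpha^2$, which is impossible unless $1$ is the only eigenvalue of $\alpha$, i.e. $\alpha=1$. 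Thus $G$ acts trivially on every such $\mathbb T^n$, so on $A$, so on $G^0$; that is, $G^0\le Z(G)$, and the first paragraph completes the proof.

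I expect the main obstacle to be the reduction showing $G^0$ abelian: besides the structure theory of connected compact semisimple groups, it rests on the genuinely Lie-theoretic input that a non-abelian compact connected Lie group cannot have all of its elements with finite right Engel sinks, and that is where an explicit computation --- or an appeal to the earlier compact-group results of \cite{khu-shu172} --- seems unavoidable. The step from ``$G^0$ abelian'' to ``$G^0$ central'' is, by comparison, a self-contained piece of integral linear algebra, made available by the simple observation that $[t,{}_{k-1}x]$ is itself a $\g_k$-value.
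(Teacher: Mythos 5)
Your proposal is correct in substance and, for the connected part, takes a genuinely different route from the paper. Both arguments share the same skeleton: pass the hypothesis to $G/G_0$, apply Theorem~\ref{t-profinite}, and then deal with the identity component, first showing $G_0$ abelian via the structure theory of compact connected groups. After that the paths diverge. The paper (Proposition~\ref{p-comp}) does not make $G_0$ central: it shows that the elements of $G_0/T$ are right Engel modulo the torsion part $T$ (Lemma~\ref{l-qspace}, a $\Q$-module eigenvalue argument), that the elements of $T$ are right Engel in $G$ (Lemma~\ref{l-torsion}, a rather delicate analysis of divisible $p$-subgroups and coprime automorphisms), and then invokes twice the theorem that an Engel abelian-by-(locally nilpotent) group is locally nilpotent. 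You instead prove the stronger assertion $G_0\leq Z(G)$ by Pontryagin duality: the hypothesis gives finiteness of $\{[t,{}_m x]\}$ for all $t\in G_0$, $x\in G$, which descends to every finite-dimensional $G$-invariant torus quotient $\widehat{A_0}\cong \mathbb{T}^n$ of $G_0$ (the needed facts --- that $\widehat{G_0}$ is torsion-free, that the $G$-orbits on the discrete dual are finite because character stabilizers are open, and that the image of $G$ in $GL_n(\mathbb Z)$ is finite --- are standard but should be stated); the eigenvalue constraint $\lambda,\lambda^2\in\{1,e^{\pm i\pi/3}\}$ obtained by testing both $x$ and $x^2$ then forces the finite-order matrix $\alpha$ to be the identity. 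This is a clean integral analogue of the eigenvalue argument in Lemma~\ref{l-exp}, it bypasses the whole torsion analysis of Lemmas~\ref{l-qspace}--\ref{l-torsion} and the Engel-group theorem, and it yields the extra information that $G_0$ is central, from which local nilpotency of the preimage is immediate (central-by-locally nilpotent). I checked the linear algebra and the equivariant surjection $G_0\twoheadrightarrow\widehat{A_0}$; they are sound.

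The one soft spot is your treatment of ``$G_0$ abelian''. The claim that for a rotation $x$ of irrational angle and a $g$ in general position the commutators $[g,{}_n x]$ take infinitely many values is not proved and is not obvious (near the identity the map $g\mapsto[g,x]$ is essentially the contraction $\operatorname{Ad}(x)-1$ on the Lie algebra, so ``general position'' needs a genuine argument). Your fallback --- applying \cite[Theorem~1.1]{khu-shu172} to the closed almost simple Lie subgroup $K$, all of whose elements are $\g_k$-values by Goto's theorem --- is legitimate and closes the step. Note also that in your setting an even lighter fix is available: since $K$ is an honest closed subgroup of $G$ (not a subgroup of the quotient $G_0/Z(G_0)$ as in Lemma~\ref{l-abelian}), the Tits alternative \cite{tits} gives an abstract free subgroup $\langle a,b\rangle\leq K$, and the pairwise distinctness of the iterated commutators $[a,{}_n b]$ in a free group directly contradicts finiteness of ${\mathscr R}([a,{}_{k-1}b])$, with no need for the divisibility trick of Lemma~\ref{l-div}. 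With one of these substitutions in place of the unproved dynamical claim, your proof is complete.
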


In contrast to the aforementioned \cite[Theorem~1.1]{khu-shu172},  under the hypotheses of Theorem~\ref{t-compact} for $k\geq 2$ one cannot obtain a finite normal subgroup with locally nilpotent quotient, even in the case of profinite groups. An example is provided by a semidirect product $T\langle\alpha\rangle$, where $T$ is a Cartesian power of a group of order $3$  and $\alpha$ is an automorphism of $T$ acting by taking inverses; here the basis of neighbourhoods of identity in $T$ consists of subgroups with finite support.

It is also worth stating the following result on compact groups, in which the sizes of right Engel sinks are uniformly bounded. This corollary for profinite groups immediately follows from Theorem~\ref{t-finite} for finite groups, while the general case of compact groups is a consequence of  the proof of Theorem~\ref{t-compact}.

\begin{corollary}\label{c-em}
Let $k$ and $m$ be positive integers. If every $\g_k$-value in a compact group $G$ has a finite  right Engel sink of cardinality at most $m$, then $G$ has an open normal locally nilpotent subgroup of index bounded in terms of $m$ only.
\end{corollary}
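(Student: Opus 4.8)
The plan is to obtain the profinite case of Corollary~\ref{c-em} from Theorem~\ref{t-finite} together with Theorem~\ref{t-profinite} (the profinite case of Theorem~\ref{t-compact}, which is available by that point), and then to derive the general compact case by feeding the resulting quantitative statement into the proof of Theorem~\ref{t-compact}.

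Suppose first that $G$ is profinite. One checks that the hypothesis descends to every finite continuous quotient $G/N$, where $N$ is an open normal subgroup: each $\g_k$-value of $G/N$ is the image of a $\g_k$-value of $G$, and the image of a right Engel sink of an element $g$ of cardinality at most $m$ is a right Engel sink of the image of $g$, again of cardinality at most $m$. By Theorem~\ref{t-finite}, each $G/N$ therefore has a normal nilpotent subgroup of $m$-bounded index; since every normal nilpotent subgroup lies in the Fitting subgroup, $F(G/N)$ has index at most $f(m)$ for some function $f$ of $m$ alone. On the other hand, Theorem~\ref{t-profinite} furnishes an open normal locally nilpotent subgroup $V$ of $G$. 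For each open normal $N$ the image $VN/N$ is a finite normal locally nilpotent, hence nilpotent, subgroup of $G/N$, so $VN/N\le F(G/N)$ and thus $|G:VN|\le f(m)$.

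Now the open subgroups $VN$, as $N$ runs over the open normal subgroups of $G$, are closed under finite intersections and have index bounded by $f(m)$, so this family stabilizes: there is an open normal $N_0$ with $VN=VN_0$ for all open normal $N\le N_0$, whence $\bigcap_N VN=VN_0$. Since $V$ is closed, $\bigcap_N VN=V$, so $V=VN_0$ and $|G:V|\le f(m)$. Thus $V$ is an open normal locally nilpotent subgroup of $m$-bounded index, which settles the profinite (in particular, the quantitative profinite) case; for profinite groups this is indeed essentially just an application of Theorem~\ref{t-finite}.

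For an arbitrary compact group $G$ one repeats the proof of Theorem~\ref{t-compact}, which reduces the statement to the profinite case by means of the structure theory of compact groups --- passing to the identity component $G_0$ and to the profinite quotient $G/G_0$, whose $\g_k$-values still have right Engel sinks of cardinality at most $m$. Replacing the appeal to the profinite case by the quantitative version just proved, and noting that the index of the extracted open normal locally nilpotent subgroup of $G$ is controlled by its index in $G/G_0$, one obtains the bound $f(m)$ again. The main point to verify here --- and essentially the only one --- is that each reduction step in the proof of Theorem~\ref{t-compact} respects the index bound; the substantive Lie-theoretic and Engel-theoretic content has already been absorbed into Theorems~\ref{t-finite} and~\ref{t-compact}, so no genuinely new obstacle appears.
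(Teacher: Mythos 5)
Your proposal is correct and follows essentially the paper's route: the profinite case rests on Theorem~\ref{t-finite} applied to finite continuous quotients, and the compact case feeds the quantitative profinite statement into $G/G_0$ and pulls the resulting open normal locally nilpotent subgroup back to $G$, whose local nilpotency is exactly what Proposition~\ref{p-comp} delivers (its identity component is again $G_0$), so the index bound transfers unchanged. The only harmless deviations are that in the profinite case you bound the index of the subgroup $V$ supplied by Theorem~\ref{t-profinite} via $VN/N\leq F(G/N)$ instead of building, as the paper does, a pronilpotent open normal subgroup of $m$-bounded index and invoking Lemma~\ref{l-p-n}, and that your family $\{VN\}$ is not literally ``closed under finite intersections'' but only downward directed (via $V(N_1\cap N_2)\subseteq VN_1\cap VN_2$), which suffices for your stabilization step --- indeed, since $V$ is open one can simply take $N\subseteq V$ to get $VN=V$ and hence $|G:V|\leq f(m)$ at once.
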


\section{Preliminaries}

\subsection*{\bf Automorphisms.} If a group $A$ acts by automorphisms on a group $B$ we use the usual notation for commutators $[b,a]=b^{-1}b^a$ and commutator subgroups $[B,A]=\langle [b,a]\mid b\in B,\;a\in A\rangle$, as well as for centralizers $C_B(A)=\{b\in B\mid b^a=b \text{ for all }a\in A\}$ and $C_A(B)=\{a\in A\mid b^a=b \text{ for all }b\in B\}$. The automorphism induced by an automorphism $\varphi  $ on the quotient by a normal $\varphi $-invariant subgroup is denoted by the same letter $\varphi $.

 If a group $A$ acts by automorphisms on an elementary abelian $p$-group  $E$, then $E$ can be regarded as an $\F_pA$-module, in which the vector addition is the group multiplication in $E$ and the scalar multiplication by $i\in \F_p=\{0,1,2,\dots,p-1\}$ is taking the  $i$-th power. Similarly, if a group $A$ acts by automorphisms on a divisible abelian group  $D$, then $D$ can be regarded as an $\Q A$-module.

 An automorphism $\varphi$ of a finite group $G$ is called a coprime automorphism if  the orders of $\varphi$ and $G$ are coprime: $(|G|,|\varphi|)=1$. A coprime automorphism of a $p$-group is called a $p'$-automorphism. The following lemma collects several well-known properties of coprime automorphisms of finite groups, which will be often used without special references.

\begin{lemma}\label{l-nakr}
Let $A$ be a group of coprime automorphisms of a finite group $G$.
\begin{itemize}
  \item[\rm (a)]  If $N$ is a normal $A$-invariant subgroup of~$G$, then the fixed points of $A$ in  the quotient $G/N$ are covered by the fixed points of $A$ in $G$, that is, $C_{G/N}(A) = C_G(A)N/N$.

  \item[\rm (b)] We have $[[G, A],A]=[G,A]$. 

  \item[\rm (c)]   If $G$ is abelian, then $G=[G,A]\times C_G(A)$.

    \item[\rm (d)]   If $G$ is a $p$-group, then $A$ acts faithfully on the Frattini quotient $G/\Phi (G)$.
        
        \item[\rm (e)] If $G$ is an abelian $p$-group,  then $A$ acts faithfully on
        the subgroup  $\Omega_1(G)=\{x\in G\mid x^{p}=1\}$. 
\end{itemize}
  \end{lemma}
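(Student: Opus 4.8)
These five assertions are standard consequences of coprime action, and the plan is to derive all of them from part~(a), which carries the only real content. For the nontrivial inclusion $C_{G/N}(A)\subseteq C_G(A)N/N$, take $g\in G$ with $gN\in C_{G/N}(A)$; then the coset $gN$ is $A$-invariant, the subgroup $N$ acts on it transitively by right multiplication, and this action is compatible with the action of $A$. Since $(|N|,|A|)=1$, Glauberman's lemma (which itself rests on the Schur--Zassenhaus theorem) provides an $A$-fixed point $gn\in gN$, that is, an element of $C_G(A)$ projecting onto $gN$. Alternatively, when $N$ is abelian one checks that $a\mapsto g^{-1}g^a$ is a $1$-cocycle from $A$ to $N$ and invokes the vanishing of $H^1(A,N)$ for coprime action; the general case then follows by induction along a characteristic series of $N$, the non-abelian characteristically simple factors being handled by Glauberman's lemma.

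Before turning to (b)--(e) I record the coprime decomposition $G=[G,A]C_G(A)$, which is immediate from (a): the group $A$ acts trivially on $G/[G,A]$, so $C_{G/[G,A]}(A)=G/[G,A]$, and by (a) this equals $C_G(A)[G,A]/[G,A]$. (Here $[G,A]$ is normal in $G$ and $A$-invariant, as one sees working inside $G\rtimes A$.) For (b), put $K=[[G,A],A]$ and pass to $\bar G=G/K$; the image of $[G,A]$ is $[\bar G,A]$ and is centralized by $A$, so $[\bar G,A]\le C_{\bar G}(A)$, and the decomposition applied to $\bar G$ gives $\bar G=C_{\bar G}(A)$, hence $[\bar G,A]=1$, i.e.\ $[G,A]=K$. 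For (c), the decomposition already gives $G=[G,A]C_G(A)$; to see that the product is direct, note that the endomorphism $\theta\colon g\mapsto\prod_{a\in A}g^a$ of the finite abelian group $G$ kills $[G,A]$ (check this on the generators $[h,a]=h^{-1}h^a$) and acts as the $|A|$-th power map on $C_G(A)$, so any element of $[G,A]\cap C_G(A)$ has order dividing $|A|$ and is therefore trivial.

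Part (d): if $a\in A$ acts trivially on $G/\Phi(G)$ then $[G,a]\le\Phi(G)$, so by the decomposition $G=[G,a]C_G(a)\le\Phi(G)C_G(a)$; since $\Phi(G)$ consists of non-generators this forces $G=C_G(a)$, i.e.\ $a=1$, so $A$ embeds into $\mathrm{Aut}(G/\Phi(G))$. Part (e) is proved by induction on $|G|$: if $G$ has exponent $p$ then $G=\Omega_1(G)$ and there is nothing to prove, so assume $N:=\Omega_1(G)$ is a proper nontrivial subgroup (characteristic, hence $A$-invariant). The $p$-th power map identifies $\Omega_1(G/N)=\Omega_2(G)/N$ with a submodule of $N$ for the $A$-action, so if $a$ is trivial on $N$ it is trivial on $\Omega_1(G/N)$, and by induction $a$ is trivial on $G/N$; thus $[G,a]\le N$ while $[N,a]=1$. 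Then $x\mapsto[x,a]$ is a homomorphism $G\to N$, and iterating $x^a=x[x,a]$ gives $x^{a^{|a|}}=x[x,a]^{|a|}$, so $[x,a]^{|a|}=1$; as $|N|$ is a power of $p$ coprime to $|a|$ this yields $[x,a]=1$ for all $x$, i.e.\ $a=1$.

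The only step with genuine substance is (a), equivalently Glauberman's lemma, equivalently the vanishing of $H^1$ in the coprime situation; the rest is formal bookkeeping, the one place calling for a little care being the verification that $[G,A]$ is a normal $A$-invariant subgroup, so that (a) may legitimately be applied with $N=[G,A]$.
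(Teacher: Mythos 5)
The paper does not actually prove this lemma; it only cites Gorenstein and Huppert for the five parts, so there is no in-paper argument to compare with. Your write-up is the standard textbook derivation, and parts (a), (c), (d), (e) are sound as sketched: (a) via Glauberman's lemma on the $A$-invariant coset $gN$ (with the usual fine print that Glauberman's lemma needs one of the two coprime groups to be soluble, which coprimality guarantees via Feit--Thompson, or the cohomological/inductive route you mention), and (c)--(e) by routine arguments from the decomposition $G=[G,A]C_G(A)$, whose deduction from (a) is correct once one checks, as you note, that $[G,A]$ is a normal $A$-invariant subgroup (the one-line identity $[g,a]^h=[gh,a][h,a]^{-1}$).

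There is, however, a genuine unjustified step in (b): you ``pass to $\bar G=G/K$'' with $K=[[G,A],A]$, but forming this quotient with an induced $A$-action requires $K\trianglelefteq G$, and this is not formal. In general, for $N\trianglelefteq G$ and $A$-invariant, $[N,A]$ is only guaranteed to be normal in $\langle N,A\rangle$, not in $G$; in the coprime situation the normality of $[[G,A],A]$ in $G$ is true, but only because it coincides with $[G,A]$ --- which is exactly the statement being proved, so it cannot be assumed. The gap is easily repaired with the material you already have. Either observe that $C_G(A)$ normalizes $K$ (for $c\in C_G(A)$ and $h\in[G,A]$ one has $[h,a]^c=[h^c,a^c]=[h^c,a]\in K$), so that $K$ is normalized by $[G,A]$, $A$ and $C_G(A)$, hence by $G=[G,A]C_G(A)$, and then your quotient argument goes through; or, more directly, avoid the quotient altogether: writing any $g\in G$ as $g=hc$ with $h\in[G,A]$, $c\in C_G(A)$ gives $[g,a]=[h,a]^c[c,a]=[h^c,a]\in[[G,A],A]$, whence $[G,A]\le[[G,A],A]$ and (b) follows at once. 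With this repair your proof is complete and is the standard argument the cited sources give.
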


See, for example,  \cite[Theorem~6.2.2]{gor} for (a), \cite[Exercise 6.3]{gor} for (b),   \cite[Exercise 5.2.3]{gor} for (c),  \cite[Theorem~5.3.5]{gor} for (d), and  \cite[Theorem~5.2.4]{gor} for (e).

We also reproduce the so-called generalized Maschke's theorem.

\begin{lemma}\label{l-maschke}
Suppose that $A$ is a group acting by coprime automorphisms on a finite abelian group $E$. If $E=E_1\times E_2$, where $E_1$ is an $A$-invariant subgroup, then there is also an $A$-invariant subgroup $E_2^*$ such that $E=E_1\times E_2^*$.
\end{lemma}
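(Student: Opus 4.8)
The plan is to prove this coprime analogue of Maschke's theorem by the usual averaging argument. Starting from an ordinary (not necessarily $A$-invariant) projection $\pi$ of $E$ onto $E_1$ with kernel $E_2$, I would average $\pi$ over the group $A$ to obtain an $A$-equivariant idempotent endomorphism $\rho$ of $E$ whose image is $E_1$, and then set $E_2^{*}=\ker\rho$. The coprimality hypothesis enters only once, to make the averaging meaningful: it guarantees that multiplication by $|A|$ is invertible on $E$.

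Concretely, write the abelian group $E$ additively. Since $E$ is finite we may assume $A$ is finite, replacing it by its image in $\mathrm{Aut}(E)$ if necessary, and then $(|A|,|E|)=1$. Let $n$ be the exponent of $E$ and fix an integer $s$ with $s|A|\equiv 1\pmod{n}$; such $s$ exists because $\gcd(|A|,n)=1$. Let $\pi\colon E\to E$ be the additive endomorphism that is the identity on $E_1$ and zero on $E_2$, so $\pi^2=\pi$, $\pi(E)=E_1$ and $\ker\pi=E_2$. Define
$$\rho(x)=s\sum_{a\in A}\bigl(\pi(x^{a})\bigr)^{a^{-1}}\qquad(x\in E),$$
where $x\mapsto x^{a}$ denotes the action of $a\in A$.

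It then remains to verify a short list of routine properties of $\rho$. It is an additive endomorphism of $E$, being an integer combination of the additive maps $x\mapsto\bigl(\pi(x^{a})\bigr)^{a^{-1}}$. Its image lies in $E_1$, since each $\pi(x^{a})$ lies in $E_1$ and $E_1$ is $A$-invariant. It restricts to the identity on $E_1$: if $x\in E_1$ then $x^{a}\in E_1$, hence $\pi(x^{a})=x^{a}$ and $\bigl(\pi(x^{a})\bigr)^{a^{-1}}=x$, so $\rho(x)=s|A|x=x$ because $nx=0$. Consequently $\rho^2=\rho$ and $E=\rho(E)\oplus\ker\rho=E_1\oplus\ker\rho$. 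Finally $\rho$ commutes with the action of $A$: for $b\in A$, re-indexing the sum by $c=ba$ (so $a^{-1}=c^{-1}b$) gives
$$\rho(x^{b})=s\sum_{a\in A}\bigl(\pi((x^{b})^{a})\bigr)^{a^{-1}}=s\sum_{c\in A}\bigl(\pi(x^{c})\bigr)^{c^{-1}b}=\Bigl(s\sum_{c\in A}\bigl(\pi(x^{c})\bigr)^{c^{-1}}\Bigr)^{b}=\rho(x)^{b}.$$
Hence $E_2^{*}:=\ker\rho$ is an $A$-invariant subgroup with $E=E_1\times E_2^{*}$, as required. There is no genuine obstacle here; the only points needing a little care are the invertibility of $|A|$ modulo the exponent of $E$ (which is precisely where coprimality is used) and the reindexing that yields $A$-equivariance.
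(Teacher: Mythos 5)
Your averaging argument is correct and complete: the projection onto $E_1$ along $E_2$ is averaged over $A$, coprimality (via invertibility of $|A|$ modulo the exponent of $E$) makes the averaged map an $A$-equivariant idempotent with image $E_1$, and its kernel is the desired invariant complement. The paper itself gives no proof but cites Huppert, Satz I.17.6, and the proof there is essentially this same standard Maschke-type averaging, so your argument matches the intended one.
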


See \cite[Satz~I.17.6]{hup}.

\subsection*{\bf Right Engel sinks.} It is easy to see that if an element $g$ of a group $G$ has a finite right Engel sink, then it has the smallest right Engel sink: if $R_1$ is a subset containing all commutators $[g,{}_n x]$ for any $n\geq r_1(x,g)$, and $R_2$ has the same property for a function $r_2(g,x)$,  then $ R_1\cap R_2$ also satisfies the same condition with the function $r(x,g)=\max\{r_1(x,g),r_2(x,g)\}$. We denote by ${\mathscr R}(g)$ the smallest right  Engel sink of $g$ when it exists. 

Clearly, if an element $g$ of a group $G$ has a finite right Engel sink ${\mathscr R}(g)$, then the image of $g$  in any quotient group  of any subgroup containing $g$ also has a finite right Engel sink, and the size of this right Engel sink does not exceed $|{\mathscr R}(g)|$. Since the $\g_k$-values in any quotient of any subgroup of a group $G$ are images of $\g_k$-values in $G$, the condition that all $\g_k$-values in $G$ have finite (or finite of size at most $m$) right Engel sinks is inherited by any quotient of any subgroup. We shall use this property throughout the paper without special references.

Recall that an element $g$ of a group $G$ is right Engel if for any $x\in G$ we  have $[g,{}_mx]=1$ for some $m=m(g,x)$ (this is equivalent to ${\mathscr R}(g)=\{1\}$). Recall that an element $g$ of a group $G$ is left  Engel if for any $x\in G$ we  have  $[x,{}_ng]=1$ for some $n=n(g,x)$.

The following lemma was proved by Heineken \cite{hei}.

\begin{lemma}[{\cite[12.3.1]{rob}}]\label{l-hei}
 If $g$ is a right Engel element of a group $G$, then $g^{-1}$ is a left Engel element.
\end{lemma}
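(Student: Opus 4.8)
The plan is to fix $x\in G$ and prove that $[x,{}_m g^{-1}]=1$ for some $m=m(x,g)$; since $x$ is arbitrary, this is exactly the assertion that $g^{-1}$ is a left Engel element. I would begin with a purely formal reduction. Put $e_0=x$ and $e_k=[g,e_{k-1}]$ for $k\ge 1$, so that $e_n=[g,[g,\dots ,[g,x]\dots ]]$ with $n$ occurrences of $g$. Using the elementary identities $[a,b^{-1}]=[b,a]^{b^{-1}}$, $[a^{c},b]=[a,b^{c^{-1}}]^{c}$, together with the fact that $g$ commutes with its own powers (so that $[g,v^{g^{-1}}]=[g,v]^{g^{-1}}$), one checks by induction on $n$ that
\[
[x,{}_n g^{-1}]=e_n^{\,g^{-n}} .
\]
In particular $[x,{}_n g^{-1}]=1$ if and only if $e_n=1$, so the whole problem is reduced to showing that $e_m=1$ for some $m$.

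Next I would invoke the right Engel hypothesis via conjugates. From the identity $[g^{y},{}_n z]=[g,{}_n z^{y^{-1}}]^{y}$ it follows that any conjugate of a right Engel element is again right Engel, so in particular $g^{x}$ is right Engel. Also $e_k=g^{-1}g^{e_{k-1}}$, so the iterated conjugates $h_k:=g^{e_k}$ satisfy $h_{k+1}=g^{h_k}$ and $h_k=g\,e_{k+1}$; hence $e_m=1$ is equivalent to $h_{m-1}=g$, that is, to $h_{m-2}$ centralising $g$. Applying the right Engel property of $g^{x}=h_0$ to the element $g$ gives $[g^{x},{}_r g]=1$ for some $r$. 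Since $g^{x}=g\,e_1$, one has $[g^{x},{}_j g]=[e_1,{}_j g]$ for every $j$, and the idea is then to rewrite $[e_1,{}_j g]$ in terms of the $e_k$ — repeatedly using $g^{e_k}=g\,e_{k+1}$ and the basic commutator identities above — so as to conclude from $[g^{x},{}_r g]=1$ that $e_m=1$ for a suitable $m$; then $[x,{}_m g^{-1}]=1$ and we are done. (An equivalent way to finish is to note that $e_m$ lies in the $m$-th term of the lower central series of $\langle g,g^{x}\rangle$, so that it would be enough to show that $\langle g,g^{x}\rangle$ is nilpotent.)

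The delicate point, and the step I expect to be the main obstacle, is exactly this last conversion. The right Engel hypothesis gives information about the \emph{left-normed} commutators $[g,{}_n y]$, whereas the natural expansion of $[x,{}_n g^{-1}]$ produces the \emph{right-nested} commutators $e_n$, and there is no direct passage between the two. Routing through the conjugate $g^{x}$ (again a right Engel element) supplies the bridge, but actually turning ``$[g^{x},{}_r g]=1$'' into ``$e_m=1$'' requires a careful commutator-collection argument in which one must keep track of all the conjugating factors that arise; each individual identity used is elementary, but organising the bookkeeping is the substance of the proof.
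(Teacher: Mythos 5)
Your preliminary reductions are correct (the identity $[x,{}_n g^{-1}]=e_n^{\,g^{-n}}$ does hold, by the induction you indicate, and conjugates of right Engel elements are right Engel), but the proof has a genuine gap exactly where you flag it: you never carry out the conversion from $[g^x,{}_r g]=1$ to $e_m=1$, and this is not a matter of routine bookkeeping. The hypothesis on $g^x$ applied to the element $g$ gives information about \emph{left-normed} commutators $[e_1,{}_j g]$, while $e_m$ is the \emph{right-nested} commutator; the passage between them breaks down because $e_{j+1}=[g,e_j]=[e_j,g]^{-1}$ and the inversion does not commute with forming further commutators in a general group (only modulo higher terms, or in a metabelian group), so no identity of the kind you invoke turns $[e_1,{}_r g]=1$ into $e_m=1$. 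The fallback you mention, proving $\langle g,g^x\rangle$ nilpotent, is a much stronger statement and is not available here. So as written the argument does not establish the lemma.

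The repair is to apply the right Engel property of the conjugate $g^x$ to the element $g^{-1}$ rather than to $g$; then no conversion between the two kinds of commutators is needed. Indeed $[x,g^{-1}]=x^{-1}gxg^{-1}=g^x g^{-1}$, and since $g^{-1}$ commutes with its own powers, the same elementary identities you already use give $[bg^{-1},{}_n g^{-1}]=[b,{}_n g^{-1}]^{\,g^{-n}}$ for every $b$, whence $[x,{}_{n+1}g^{-1}]=[g^x,{}_n g^{-1}]^{\,g^{-n}}$. As $g^x$ is right Engel, $[g^x,{}_n g^{-1}]=1$ for some $n$, so $[x,{}_{n+1}g^{-1}]=1$ and $g^{-1}$ is left Engel. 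This is Heineken's original one-line argument, which is what the paper relies on: it gives no proof of its own and cites Heineken and \cite[12.3.1]{rob}.
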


The following elementary lemma in \cite{khu-shu172} characterises finite right Engel sinks.

\begin{lemma}[{\cite[Lemma~2.2]{khu-shu172}}]\label{l-r-sink}
If an element $g$ of a group $G$ has a finite right Engel sink, then
\begin{equation}\label{r-sink}
{\mathscr R}(g)=\{z\mid z=[g,{}_nx]=[g,{}_{n+m}x],\quad \; x\in G,\;\; n\geq 1,\;\;m\geq 1\}.
\end{equation}
\end{lemma}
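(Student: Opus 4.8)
The plan is to establish the set equality by proving two inclusions, writing $S$ for the set on the right-hand side of \eqref{r-sink}. The single observation driving both inclusions is that, for a fixed $x\in G$, the sequence $a_n:=[g,{}_nx]$ is produced by iterating one self-map, since $a_{n+1}=[a_n,x]$; hence each term determines the next, so as soon as two terms coincide the sequence is periodic from that point on. Precisely, if $a_n=a_{n+m}$ then $a_{n+i}=a_{n+m+i}$ for all $i\ge 0$, and in particular $a_n=a_{n+jm}$ for every $j\ge 0$.

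For the inclusion $S\subseteq{\mathscr R}(g)$, take $z=a_n=a_{n+m}$ with $n,m\ge 1$ coming from some $x\in G$. By the periodicity above, $z=a_{n+jm}$ for all $j\ge 0$. If $R$ is an arbitrary right Engel sink of $g$, say with $[g,{}_\ell x]\in R$ whenever $\ell\ge r(x,g)$, then choosing $j$ with $n+jm\ge r(x,g)$ gives $z\in R$. Thus $z$ lies in every right Engel sink of $g$, and in particular $z\in{\mathscr R}(g)$.

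For the reverse inclusion ${\mathscr R}(g)\subseteq S$, I would show that $S$ is itself a right Engel sink of $g$; minimality of ${\mathscr R}(g)$ then gives the inclusion. Fix $x\in G$. Since $g$ has a finite right Engel sink $R$, there is $r=r(x,g)\ge 1$ with $a_\ell\in R$ for all $\ell\ge r$; as $R$ is finite, the pigeonhole principle produces indices $r\le p<q$ with $a_p=a_q$. Applying the periodicity observation to this coincidence, $a_\ell=a_{\ell+(q-p)}$ for every $\ell\ge p$, and since $\ell\ge p\ge 1$ and $q-p\ge 1$ this shows $a_\ell\in S$ for all $\ell\ge p$. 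Hence $S$ is a right Engel sink of $g$, and together with the first inclusion this gives ${\mathscr R}(g)=S$, as claimed.

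The argument is elementary, and I do not expect a real obstacle; the only point that needs to be noticed is that the recursion $a_{n+1}=[a_n,x]$ upgrades ``some value occurs infinitely often in the finite set $R$'' to the strictly stronger statement ``the tail of the sequence is eventually periodic'', which is exactly what yields a repetition $a_n=a_{n+m}$ of the form required in the description of $S$.
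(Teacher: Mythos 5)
Your proof is correct. The paper does not reproduce a proof of this lemma (it is quoted from \cite[Lemma~2.2]{khu-shu172}), and your argument --- the recursion $a_{n+1}=[a_n,x]$ making the sequence $a_n=[g,{}_nx]$ eventually periodic once two terms coincide, the pigeonhole principle inside a finite sink to produce such a coincidence, and the minimality of ${\mathscr R}(g)$ (it is a sink contained in every sink) to get both inclusions --- is exactly the standard argument behind this characterisation.
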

Here, elements $x$ and numbers $m,n$ in \eqref{r-sink} vary for different $z$ and are not unique.

The next lemma is also similar to arguments in \cite{khu-shu172}, but we prove it here for the benefit of the reader.

\begin{lemma}\label{l3}
Suppose that a cyclic group $\langle a\rangle$ acts by %coprime 
automorphisms on a finite abelian group $V$ such that $V=[V,a]$. Then
\begin{itemize}
  \item[\rm (a)]  $C_V(a)=1$ and  the mapping $u\mapsto [u,a]$ is an automorphism of $V$;
  \item[\rm (b)] every element of $V$ is a $\g_k$-value in $V\langle a\rangle$  for any positive integer $k$;
  \item[\rm (c)] the minimal right Engel sink of an element $v\in V$ is the orbit of $v$ under the automorphism $u\mapsto [u,a]$, so if $|\mathscr{R}(v)|=s$
      then $$\mathscr{R}(v)=\big\{v=[v,{}_sa], \, [v,a], [v,{}_2a],\dots, [v,{}_{s-1}a]\big\}.$$
\end{itemize}
\end{lemma}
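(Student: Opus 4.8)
The plan is to treat the three parts in order. Throughout I write $V$ additively as a module over the integral group ring $\mathbb{Z}\langle a\rangle$, so that $[u,a]=u^{a}-u=(a-1)u$ and, more generally, $[u,{}_{n}a^{i}]=(a^{i}-1)^{n}u$ for $u\in V$, and I set $\theta\colon u\mapsto[u,a]=(a-1)u$, which is an additive endomorphism of $V$ exactly because $V$ is abelian. For part~(a), the image of $\theta$ is the subgroup $\{[u,a]\mid u\in V\}=[V,a]$, which equals $V$ by hypothesis; hence $\theta$ is a surjective endomorphism of the finite group $V$, therefore an automorphism, and $C_{V}(a)=\ker\theta=1$. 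For part~(b), the case $k=1$ is trivial, while for $k\ge 2$ one has $[V,{}_{k-1}a]=\theta^{k-1}(V)=V$ because $\theta$ is an automorphism, so, given $v\in V$, the element $w=\theta^{-(k-1)}(v)\in V$ satisfies $v=[w,\underbrace{a,\dots,a}_{k-1}]$, a $\g_{k}$-value in $V\langle a\rangle$.

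For part~(c), the key reduction is that all the relevant commutators can be computed inside $V$: for $x\in V\langle a\rangle$, writing $x=ua^{i}$ with $u\in V$, one has $v^{x}=v^{a^{i}}$ for $v\in V$ (since $V$ is abelian), so $[v,{}_{n}x]=(a^{i}-1)^{n}v$. In particular $[v,{}_{n}a]=\theta^{n}(v)$, and since $\theta$ has finite order in the finite group $\mathrm{Aut}(V)$, the sequence $(\theta^{n}(v))_{n\ge 1}$ is periodic, with least period $s$, say; thus $[v,{}_{n}a]=[v,{}_{n+s}a]$ for every $n\ge 1$, and Lemma~\ref{l-r-sink} puts every $\theta^{n}(v)$ into $\mathscr{R}(v)$. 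Hence the orbit $\big\{v=\theta^{s}(v)=[v,{}_{s}a],\,[v,a],\,[v,{}_{2}a],\dots,[v,{}_{s-1}a]\big\}$ of $v$ under $\theta$ is contained in $\mathscr{R}(v)$.

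The remaining step — which I expect to be the only non-routine point — is the reverse inclusion $\mathscr{R}(v)\subseteq\{\theta^{j}(v)\mid j\ge 0\}$. By Lemma~\ref{l-r-sink}, any $z\in\mathscr{R}(v)$ has the form $z=(a^{i}-1)^{n}v=(a^{i}-1)^{n+m}v$ for some $i$ and some $n\ge 1$, $m\ge 1$; writing $\psi=a^{i}-1$, this says $z=\psi^{n}v$ and $\psi^{m}z=z$, so $z$ lies in the eventual image $U=\bigcap_{k\ge 1}\psi^{k}(V)$ and is obtained by applying $\psi^{n}$ to the $U$-component of $v$ in the Fitting decomposition $V=\ker\psi^{N}\oplus U$ for $N$ large. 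I would then exploit the factorisation $\psi=\theta\,\sigma_i$ with $\sigma_i=1+a+\dots+a^{i-1}$: as $\theta$ commutes with $\psi$ and is an automorphism of $V$, it restricts to an automorphism of $U$, on which $\sigma_i=\theta^{-1}\psi$ is then invertible as well, and the task reduces to showing that the (periodic) $\psi$-orbit through which $z$ runs stays inside the $\theta$-orbit of $v$, the leverage being $C_{V}(a)=1$. The point requiring the most care here — and the main obstacle — is the interplay between the Fitting components of the various operators $a^{i}-1$ (in particular those $i$ for which $a^{i}-1$ fails to be invertible on all of $V$) and those of the auxiliary operators $\sigma_i$; parts (a), (b) and the inclusion ``$\supseteq$'' in part~(c), by contrast, are immediate.
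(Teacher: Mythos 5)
Your treatment of (a), (b) and of the inclusion of the $\theta$-orbit of $v$ in $\mathscr R(v)$ is correct and coincides with what the paper itself proves: the paper's entire justification of part (c) is the one sentence that the equality is ``now clear'', with no argument offered for the reverse inclusion. The step you single out as the remaining obstacle, $\mathscr R(v)\subseteq\{\theta^{j}(v)\}$, is not merely non-routine --- it is false, so no refinement of your Fitting-decomposition strategy can close it. Take $V$ cyclic of order $7$ and let $a$ act by $v\mapsto v^{2}$, so that $\langle a\rangle$ is cyclic of order $3$. Then $[v,a]=v^{-1}v^{2}=v$, so $\theta$ is the identity map, $V=[V,a]$ and $C_V(a)=1$, and the $\theta$-orbit of every $v$ is the singleton $\{v\}$. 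But for $x=a^{2}$ one computes $[v,{}_{n}a^{2}]=v^{3^{n}}$, and $3$ is a primitive root modulo $7$; hence for $v\neq 1$ these commutators run periodically through all six non-trivial elements of $V$, each of which satisfies $[v,{}_{n}a^{2}]=[v,{}_{n+6}a^{2}]$ and so lies in $\mathscr R(v)$ by Lemma~\ref{l-r-sink}. Thus $|\mathscr R(v)|\geq 6$ while the orbit has size $1$: exactly the operators $\psi=a^{i}-1$ with $i\neq 1$ that you were trying to control do enlarge the sink. (Even more cheaply, $1\in\mathscr R(v)$ for every $v$, via $x=1$, so the displayed equality in (c) cannot hold verbatim for $v\neq 1$.)

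So the fault lies in the statement, not in your technique, and the repair is the one your write-up already contains: weaken (c) to the assertion that the $\theta$-orbit of $v$, that is $\{v,[v,a],[v,{}_{2}a],\dots,[v,{}_{s-1}a]\}$ with $s$ now the orbit length, is contained in $\mathscr R(v)$ and consists of non-trivial elements when $v\neq 1$ (immediate from (a)). This weaker form is all that the paper ever uses. In Lemma~\ref{l-exp} only the bound ``orbit size $\leq|\mathscr R(v)|\leq m$'' is needed to get $(g-1)^{m!}=1$, and the powers $g^{i}$ are handled there by applying the lemma to each $g^{i}$ separately (each satisfies $V=[V,g^{i}]$), so the $a^{i}$-problem never has to be faced; in Lemma~\ref{l-r} only the fact that the non-trivial elements $[v_{i},{}_{n}a_{i}]$ lie in $\mathscr R(v_{i})$ is used. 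Your proof of (a), (b) and the forward inclusion is complete, essentially identical to the paper's argument, and already establishes everything the later sections require; the reverse inclusion should simply be dropped rather than pursued.
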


\begin{proof}
 Since $V$  is abelian, we have $[uv,a]=[u,a][v,a]$ for $u,v\in V$, so the mapping $u\mapsto [u,a]$ is a homomorphism with kernel $C_V(a)$. The condition $V=[V,a]$ means that the mapping $u\mapsto [u,a]$ is surjective, and therefore also  injective, since $V$ is finite, so that $C_V(a)=1$.  Part (a) is proved.

Thus, every $v\in V$ is a commutator of the form $v=[u,a]$, $u\in V$. Then also $u=[u_1,a]$, so $v=[u_1,a,a]$, and so on, which proves part (b).

It is now clear that the orbit of a non-trivial element $v\in V$ under the automorphism $u\mapsto [u,a]$  is the minimal right Engel sink $\mathscr{R}(v)$ of $v$, which proves (c).
\end{proof}

We shall also need another elementary lemma from~\cite{khu-shu172}

\begin{lemma}[{\cite[Lemma~2.3]{khu-shu172}}]\label{l-c-s}
Suppose that an element $g$ of a group $G$ has a finite right Engel sink of cardinality $|{\mathscr R}(g)|=m$. If $h\in C_G(g)$, then $h^{m!}$ centralizes
${\mathscr R}(g)$.
\end{lemma}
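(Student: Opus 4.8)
The plan is to show that conjugation by $h$ permutes the finite set $\mathscr{R}(g)$, and then to use the fact that every permutation of an $m$-element set has order dividing $m!$.

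The first and only substantive step is the conjugation-invariance of the sink. Since $h$ centralizes $g$, for every $x\in G$ and every $n\geq 1$ we have $[g,{}_nx]^h=[g^h,{}_n x^h]=[g,{}_n x^h]$. Hence if $R$ is any right Engel sink of $g$, then so is $R^h$: given $x\in G$, pick $r=r(x^{h^{-1}},g)$ so that $[g,{}_n x^{h^{-1}}]\in R$ for all $n\geq r$, and conjugating by $h$ yields $[g,{}_n x]\in R^h$ for all $n\geq r$. In particular $\mathscr{R}(g)^h$ is a right Engel sink of $g$ of the same cardinality $m$. Since $\mathscr{R}(g)$ is the smallest right Engel sink, it is contained in every right Engel sink, in particular $\mathscr{R}(g)\subseteq\mathscr{R}(g)^h$; both sets being finite of size $m$, this forces $\mathscr{R}(g)^h=\mathscr{R}(g)$. (Alternatively, one could argue directly from the description in Lemma~\ref{l-r-sink}: if $z=[g,{}_nx]=[g,{}_{n+m}x]\in\mathscr{R}(g)$, then $z^h=[g,{}_n x^h]=[g,{}_{n+m}x^h]$, so $z^h\in\mathscr{R}(g)$ again by Lemma~\ref{l-r-sink}.)

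It remains to conclude. Conjugation by $h$ induces a permutation of the $m$-element set $\mathscr{R}(g)$, and this permutation has order dividing $m!$; therefore conjugation by $h^{m!}$ fixes every element of $\mathscr{R}(g)$, which is exactly the statement that $h^{m!}$ centralizes $\mathscr{R}(g)$. I do not expect any real obstacle: the one place needing care is the invariance argument, where it is essential both that $h$ centralizes $g$ (so that the base point of the iterated commutator is unchanged) and that the sink is finite (so that the inclusion $\mathscr{R}(g)\subseteq\mathscr{R}(g)^h$ is an equality).
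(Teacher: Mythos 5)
Your proof is correct and follows essentially the same route as the source: the paper only cites \cite[Lemma~2.3]{khu-shu172}, and the argument there is precisely this one --- since $h$ centralizes $g$, conjugation by $h$ sends $[g,{}_nx]$ to $[g,{}_nx^h]$ and hence permutes the finite set ${\mathscr R}(g)$, and a permutation of an $m$-element set raised to the power $m!$ is the identity. Both your minimality argument and your alternative via Lemma~\ref{l-r-sink} are valid ways to get the invariance ${\mathscr R}(g)^h={\mathscr R}(g)$.
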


We proved the following theorem in \cite{khu-shu172}.

\begin{theorem}[{\cite[Theorem~3.1]{khu-shu172}}]\label{t-old-finite}
Let $G$ be a finite group, and $m$ a positive integer. Suppose that for every $g\in G$ the cardinality of the right Engel sink ${\mathscr R}(g)$ is at most $m$.
 Then $G$ has a normal subgroup $N$ of order bounded in terms of $m$ such that $G/N$ is nilpotent.
\end{theorem}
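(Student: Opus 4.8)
The plan is to bound the order of the nilpotent residual $\g_\infty(G)=\bigcap_n\g_n(G)$, the smallest normal subgroup with nilpotent quotient. Since $G/N$ is nilpotent exactly when $N\supseteq\g_\infty(G)$, producing a normal subgroup $N$ of $m$-bounded order with nilpotent quotient is equivalent to showing that $|\g_\infty(G)|$ is $m$-bounded. Throughout I would use that the hypothesis passes to subgroups and quotients, hence to sections: the smallest right Engel sink of the image of any element has size at most $m$ in every section of $G$.

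The computational heart is a consequence of Lemma~\ref{l3}. Suppose $a\in G$ acts on an abelian $p$-section $V$ with $V=[V,a]$ and $C_V(a)=1$; writing $V$ additively and letting $A$ denote the operator induced by $a$, a direct calculation as in Lemma~\ref{l3} gives $[a,{}_n(va)]=(-1)^{n-1}A^{-1}(1-A^{-1})^{n}v$ for $x=va$, so the tail of this sequence runs through the orbit of $A^{-1}(1-A^{-1})v$ under the invertible operator $-(1-A^{-1})$. As $v$ ranges over $V$ the first terms $A^{-1}(1-A^{-1})v$ already exhaust $V$, whence $\mathscr{R}(a)\supseteq V$ and therefore $|V|\le m$. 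Decomposing an arbitrary coprime action via $V=C_V(a)\times[V,a]$ (Lemma~\ref{l-nakr}(c)) yields the key bound: if a $p'$-element $a$ acts on an abelian $p$-section $V$, then $|[V,a]|\le m$. Applying this to the Frattini layers of an $a$-invariant series and invoking Lemma~\ref{l-nakr}(d), the same bound (with an $m$-bounded constant) controls $|[P,a]|$ for a $p'$-element acting on a nilpotent $p$-section.

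Next I would dispose of the non-soluble part and then treat the soluble case. In a non-abelian finite simple group of large order one can exhibit a $p'$-element acting on an abelian $p$-section with commutator of unbounded order (for instance a maximal torus acting on a root subgroup), which by the key bound forces an element of unbounded sink; hence the non-abelian composition factors have $m$-bounded order, and the same mechanism applied to the action on the socle of $G$ modulo its soluble radical bounds their number. This gives a normal subgroup $N_0$ of $m$-bounded order with $G/N_0$ soluble, so we may assume $G$ soluble. Put $F=F(G)$; then $G/F$ acts faithfully on the elementary abelian sections $F/\Phi(F)=\prod_p\overline F_p$, and by the key bound every element of $G/F$ has $m$-bounded commutator there. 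A counting argument on the simultaneous eigenspaces of an abelian coprime section (each element acts non-trivially through only boundedly many characters, since each contributes a factor to its commutator, while each relevant eigenspace and the corresponding cyclic image are $m$-bounded) shows that such a faithful linear group has $m$-bounded order, so $|G/F|$ is $m$-bounded. Finally, with $|G/F|$ bounded, $\g_\infty(G)$ is generated modulo $F$ by boundedly many cosets and meets $F$ in a subgroup generated by the boundedly many subgroups $[F,g]$, each of $m$-bounded order by the key bound; hence $|\g_\infty(G)|$ is $m$-bounded and $N=\g_\infty(G)N_0$ is the desired subgroup.

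The main obstacle is the globalisation carried out in the third paragraph: the per-element, per-section commutator bounds must be assembled first into a bound on $|G/F(G)|$ and then into a bound on the whole nilpotent residual. The two delicate points are the linear-algebra estimate showing that a faithful action in which every element has $m$-bounded commutator is itself of $m$-bounded order, and the passage from the bound on abelian Frattini sections to a bound on $[P,a]$ for non-abelian $p$-sections; by comparison the non-soluble reduction, although it requires structural information about simple groups, is routine.
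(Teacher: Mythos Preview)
The paper does not contain a proof of this statement: Theorem~\ref{t-old-finite} is quoted verbatim from \cite[Theorem~3.1]{khu-shu172} and used as a black box (in Lemma~\ref{l2}). There is therefore no ``paper's own proof'' to compare your proposal against.

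On your sketch itself, a few remarks. Your central calculation is essentially right in spirit but the displayed formula is not: with $V$ written additively and $A$ the operator of~$a$, one has $[a,va]=[a,v]^a=-A(A-1)v$, and for $w\in V$ the iteration $[w,va]=[w,a]=(A-1)w$, so $[a,{}_n(va)]=-A(A-1)^{n}v$, not $(-1)^{n-1}A^{-1}(1-A^{-1})^{n}v$. This does not affect the conclusion $V\subseteq\mathscr{R}(a)$, since $A$ and $A-1$ are invertible and $(A-1)$ has finite multiplicative order, so for periodic $n$ the values $-A(A-1)^n v$ sweep out all of $V$ as $v$ varies.

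The more serious issue is the step you yourself flag: lifting the bound $|[V,a]|\le m$ from abelian $p$-sections to a bound on $|[P,a]|$ for an arbitrary $p$-section. ``Applying this to the Frattini layers'' only gives $|[P/\Phi(P),a]|\le m$, i.e.\ a bound on the number of generators of $[P,a]$; it does not by itself bound $|[P,a]|$, because nothing yet bounds the exponent (equivalently, the number of layers in a chief series). One needs an additional argument, for instance a separate bound on the order of~$a$ on each $[F_p,a]$ (in the spirit of Lemma~\ref{l-exp}) together with Lemma~\ref{l-e-r}, or a direct analysis of the action on an $a$-invariant abelian subgroup of maximal order. Likewise the ``counting argument on simultaneous eigenspaces'' that is meant to bound $|G/F(G)|$ is only gestured at; making it precise is exactly the kind of work carried out in this paper for $\gamma_k$-values (Lemmas~\ref{l-exp} and~\ref{l-r}), and the same split into an exponent bound and a rank bound would serve you here. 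Your identification of these two points as the delicate ones is accurate; as written, the proposal is a reasonable plan with the hard steps still to be filled in.
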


\subsection*{Ranks}
The (Pr\"ufer) rank of a finite group $G$ is the least positive integer $r$ such that every subgroup of $G$ can be generated by $r$ elements. The following lemma appeared independently and simultaneously in the papers of Yu.~M.~Gorchakov~\cite{grc}, Yu.~I.~Merzlyakov~\cite{me}, and as ``P.~Hall's lemma" in the paper of J.~Roseblade~\cite{rs}.

\begin{lemma}\label{l-gmh}
 Let $p$ be a~prime number. The rank of a~$p$-group of
automorphisms of an abelian finite $p$-group of rank~$r$ is bounded in
terms of~$r$.
\end{lemma}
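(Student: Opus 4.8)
Denote by $A$ the abelian $p$-group being acted on, put $r=\mathrm{rank}(A)$ and let $p^{n}=\exp(A)$. Since every subgroup of the $p$-group in the statement is again a $p$-group of automorphisms of $A$, it suffices to bound the number of generators of an arbitrary $p$-subgroup $P\le\mathrm{Aut}(A)$ in terms of $r$; I would do this by passing to $\mathrm{GL}_{r}(\mathbb{Z}/p^{n}\mathbb{Z})$ and then bounding the rank of the principal congruence subgroup $\Gamma=1+pM_{r}(\mathbb{Z}/p^{n}\mathbb{Z})$ \emph{independently of the exponent $p^{n}$}.

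First the reduction. Write $A=F/C$ with $F=(\mathbb{Z}/p^{n}\mathbb{Z})^{r}$ a free module of rank $r$ and $C=\ker(F\twoheadrightarrow A)$; then $C\subseteq pF$, because $A/pA\cong F/(pF+C)$ has $\F_{p}$-dimension $r=\dim_{\F_{p}}(F/pF)$. As $F$ is free, every automorphism of $A$ is induced by some $g\in\mathrm{End}(F)$ with $gC\subseteq C$, and then $g$ is onto modulo $pF$; since $C\subseteq pF$, Nakayama's lemma makes $g$ invertible, so $gC=C$. Hence $\mathrm{Aut}(A)$ is a quotient of $S=\{g\in\mathrm{GL}_{r}(\mathbb{Z}/p^{n}\mathbb{Z}):gC=C\}$, the kernel consisting of elements trivial on $F/C$, which lie in $\Gamma$ and so form a $p$-group. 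Therefore $P$ is a quotient of a $p$-subgroup $\widehat P$ of $\mathrm{GL}_{r}(\mathbb{Z}/p^{n}\mathbb{Z})$, so $\mathrm{rank}(P)\le\mathrm{rank}(\widehat P)\le\mathrm{rank}(\widetilde U)$, where $\widetilde U$ is a Sylow $p$-subgroup of $\mathrm{GL}_{r}(\mathbb{Z}/p^{n}\mathbb{Z})$ — namely the preimage, under reduction mod $p$, of the upper-unitriangular subgroup of $\mathrm{GL}_{r}(\F_{p})$, a group of order $p^{r(r-1)/2}$. Since $\widetilde U$ is an extension of that unitriangular group by $\Gamma$ and $\mathrm{rank}(G)\le\mathrm{rank}(N)+\mathrm{rank}(G/N)$ for any normal subgroup $N$, we get $\mathrm{rank}(P)\le\mathrm{rank}(\Gamma)+r(r-1)/2$.

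It remains to bound $\mathrm{rank}(\Gamma)$; I take $p$ odd (for $p=2$ one replaces $\Gamma$ by $1+4M_{r}(\mathbb{Z}/2^{n}\mathbb{Z})$, of index $2^{r^{2}}$, and argues in the same way). Filter $\Gamma$ by $\Gamma_{i}=1+p^{i}M_{r}(\mathbb{Z}/p^{n}\mathbb{Z})$, so that $\Gamma_{i}/\Gamma_{i+1}$ is the additive group $M_{r}(\F_{p})$ via the \emph{leading term} $1+p^{i}X\mapsto X\bmod p$. The key point is that raising to the $p$th power moves an element up the filtration by exactly one step without changing its leading term:
\[
(1+p^{i}X)^{p}\equiv 1+p^{i+1}X\pmod{p^{i+2}M_{r}(\mathbb{Z}/p^{n}\mathbb{Z})},
\]
because for $p$ odd all cross-terms $\binom{p}{j}p^{ij}X^{j}$ with $j\ge 2$ lie in $p^{i+2}M_{r}(\mathbb{Z}/p^{n}\mathbb{Z})$. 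Hence, for any $H\le\Gamma$, the leading-term subspaces $V_{i}\subseteq M_{r}(\F_{p})$ of $H\cap\Gamma_{i}$ satisfy $V_{1}\subseteq V_{2}\subseteq\cdots$, so their union $V$ has dimension at most $r^{2}$. Picking, for each $i$, elements of $H\cap\Gamma_{i}$ whose leading terms extend a basis of $V_{i-1}$ to one of $V_{i}$, and arguing down the filtration — at level $i$, the leading term of any element of $H\cap\Gamma_{i}$ is a combination of the leading terms of the chosen level-$i$ elements and of iterated $p$th powers of earlier chosen ones — one finds that the chosen elements generate $H$. Their number telescopes to $\dim V\le r^{2}$, so $\mathrm{rank}(\Gamma)\le r^{2}$. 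Altogether $\mathrm{rank}(P)\le r^{2}+r(r-1)/2$ for $p$ odd, and $\mathrm{rank}(P)\le 2r^{2}+r(r-1)/2$ for $p=2$, which proves the Lemma.

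The only delicate step is the bound on $\mathrm{rank}(\Gamma)$. The filtration $\Gamma_{1}\supset\Gamma_{2}\supset\cdots\supset\Gamma_{n}$ has $n-1$ nontrivial layers, each of rank at most $r^{2}$, so the crude bound $\mathrm{rank}(\Gamma)\le(n-1)r^{2}$ would depend on $\exp(A)$ rather than on $r$ alone; the leading-term/telescoping argument — in essence a direct proof that $\Gamma$ is a powerful (indeed uniformly powerful) $p$-group, so that every subgroup of $\Gamma$ is $d(\Gamma)$-generated — is exactly what removes this dependence. The rest (writing $A=F/C$, lifting automorphisms, and the rank estimate for $1\to\Gamma\to\widetilde U\to U_{r}(\F_{p})\to 1$) is routine.
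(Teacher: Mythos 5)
Your proof is correct, but note that the paper does not prove this lemma at all: it is quoted with references to Gorchakov, Merzlyakov and Roseblade (``P.~Hall's lemma''), whose original arguments are of a different, more module-theoretic/stability-group flavour. What you supply instead is a complete self-contained proof in the modern Lubotzky--Mann style: the reduction $A=F/C$ with $C\subseteq pF$ (valid because for a finite abelian $p$-group the Pr\"ufer rank equals $\dim_{\F_p}A/pA$), the lifting of automorphisms to the stabilizer $S\leq \mathrm{GL}_r(\mathbb{Z}/p^n\mathbb{Z})$ with kernel inside $\Gamma=1+pM_r(\mathbb{Z}/p^n\mathbb{Z})$, the Sylow reduction, and the subadditivity of rank in extensions are all sound, and your leading-term/telescoping argument for $\mathrm{rank}(\Gamma)\leq r^2$ (with the usual shift to $1+4M_r$ when $p=2$) is exactly a hands-on proof that $\Gamma$ is uniformly powerful, so that every subgroup is $d(\Gamma)$-generated; the downward induction showing that the chosen elements generate an arbitrary $H\leq\Gamma$ works because the leading-term map $\Gamma_i\to M_r(\F_p)$ is a homomorphism and $p$-th powers shift the filtration by one step without changing leading terms. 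The payoff of your route is an explicit bound, $r^2+r(r-1)/2$ for odd $p$ and $2r^2+r(r-1)/2$ for $p=2$, whereas the cited classical proofs give a bound without this streamlined congruence-subgroup mechanism; on the other hand, since the paper already invokes Lubotzky--Mann (\cite{LM}) for Lemma~\ref{l-e-r}, you could have shortened the delicate step by quoting their theorem that a powerful $p$-group $\Gamma$ has $\mathrm{rank}(\Gamma)=d(\Gamma)\leq r^2$ instead of reproving it.
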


The following well-known result is due to A.~Lubotzky and A.~Mann \cite[Propositions~2.6 and 4.2.6]{LM}.

\begin{lemma}\label{l-e-r}
Let $p$ be a prime, and $G$ a finite group of exponent $p^k$ and of rank $r$. Then there is a number $s(k,r)$ depending only on $k$ and $r$ such that $|G|\leq p^{s(k,r)}$.
\end{lemma}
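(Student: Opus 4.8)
The plan is to reduce the statement to the structure theory of powerful $p$-groups. Since $G$ has exponent $p^k$, it is automatically a finite $p$-group, so I may freely use results about $p$-groups of bounded rank. The overall strategy is: peel off a powerful subgroup of bounded index, and then bound the order of that powerful subgroup directly from the exponent.

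First I would invoke the theorem of Lubotzky and Mann that a finite $p$-group of rank $r$ contains a powerful characteristic subgroup $P$ whose index is bounded in terms of $r$ alone, say $[G:P]\le p^{d(r)}$ for some function $d$; for $p=2$ one uses the appropriate notion of a powerful $2$-group ($[P,P]\le P^4$), the resulting small discrepancy being harmless. This is the one genuinely deep ingredient, and I expect it to be the hard part of the whole argument.

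Next I would estimate $|P|$. As a section of $G$, the group $P$ has rank at most $r$, so $P$ is $r$-generated and its Frattini quotient $P/P^p$ has order at most $p^r$. In a powerful $p$-group the $p$-power map induces, for each $i\ge 0$, a surjective homomorphism $P/P^p\twoheadrightarrow P^{p^i}/P^{p^{i+1}}$, whence $|P^{p^i}/P^{p^{i+1}}|\le p^r$; moreover $P^{p^k}=\{x^{p^k}\mid x\in P\}$, which is trivial because $G$ has exponent $p^k$. Therefore
\[
|P|=\prod_{i=0}^{k-1}\bigl|P^{p^i}/P^{p^{i+1}}\bigr|\le p^{rk},
\]
and hence $|G|=[G:P]\,|P|\le p^{d(r)+rk}$. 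Setting $s(k,r)=d(r)+rk$ completes the proof.

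The only real obstacle is thus the first step — the existence of a powerful subgroup whose index is bounded by a function of $r$ independent of $p$ — together with the accompanying facts about $p$-power maps in powerful groups used in the second step; these are precisely the content of the Lubotzky--Mann theory. One could instead try to deduce the result from the positive solution of the restricted Burnside problem, since $G$ is an $r$-generated group of exponent $p^k$; but that route yields a bound depending on $p$ as well, so it would not give the $p$-independent statement claimed here.
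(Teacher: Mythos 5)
Your argument is correct and is essentially the paper's: the paper gives no proof of this lemma but simply cites Lubotzky--Mann's theory of powerful $p$-groups, and your two steps (a characteristic powerful subgroup of index $p^{d(r)}$, then $|P|\le p^{rk}$ via the surjections $P/P^p\twoheadrightarrow P^{p^i}/P^{p^{i+1}}$ and $P^{p^k}=1$) are exactly the standard derivation from those cited results. Nothing to add beyond noting you correctly identified the Lubotzky--Mann powerful-subgroup theorem as the only deep ingredient.
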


\section{Finite groups}\label{s-finite}

We begin with considering direct products of finite simple groups.

\begin{lemma}\label{l1}
Let $k$ be a positive integer.
If a finite group $G$ is a direct product of non-abelian simple groups, then every element of $G$ is a $\g_k$-value.
\end{lemma}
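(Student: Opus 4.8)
The plan is to reduce immediately to the case of a single non-abelian simple group. Write $G = S_1 \times \dots \times S_t$ with each $S_i$ non-abelian simple. A $\g_k$-value in $G$ is a tuple of $\g_k$-values, one in each factor, because the commutator operation is computed componentwise in a direct product. Hence it suffices to show: for every non-abelian finite simple group $S$ and every $k \geq 1$, every element of $S$ is a $\g_k$-value. Given this, an arbitrary $(g_1,\dots,g_t) \in G$ is obtained by choosing, in each $S_i$ separately, elements $a_1^{(i)},\dots,a_k^{(i)}$ with $[a_1^{(i)},\dots,a_k^{(i)}] = g_i$, and then $[(a_1^{(1)},\dots,a_1^{(t)}),\dots,(a_k^{(1)},\dots,a_k^{(t)})] = (g_1,\dots,g_t)$.

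For the simple-group case I would argue by induction on $k$. The base case $k=1$ is trivial since a $\g_1$-value is just an arbitrary element. For $k=2$ the statement is exactly the assertion that in a non-abelian finite simple group every element is a commutator; this is the Ore conjecture, proved by Liebeck, O'Brien, Shalev, and Tiep. For the inductive step, suppose every element of $S$ is a $\g_{k-1}$-value. Given $g \in S$, first write $g$ as a $\g_{k-1}$-value: $g = [a_1,\dots,a_{k-1}]$. It then suffices to express $a_1$ itself as a commutator $a_1 = [b,c]$ with $b,c \in S$, for then $g = [[b,c],a_2,\dots,a_{k-1}]$ — wait, this does not have the left-normed shape $[b,c,a_2,\dots,a_{k-1}]$ unless we are careful. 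The cleaner route is: since every element of $S$ is a commutator (Ore), and the set of $\g_k$-values is the set of left-normed commutators $[x_1,x_2,\dots,x_k]$, I use the inductive hypothesis in the form that every element is a $\g_{k-1}$-value $[y_1,\dots,y_{k-1}]$, then replace $y_1$ by a commutator $[u,v]$: since $[u,v,y_2,\dots,y_{k-1}]$ is a $\g_k$-value and every $\g_{k-1}$-value arises this way (as $y_1$ ranges over all of $S = [S,S]$), we conclude every $\g_{k-1}$-value is a $\g_k$-value; combined with $k=1$ giving all elements, every element of $S$ is a $\g_k$-value.

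The main obstacle is the appeal to the Ore conjecture: one genuinely needs the deep theorem that every element of a non-abelian finite simple group is a single commutator, as the combinatorics above reduce everything to this fact. An alternative that avoids Ore would use the older result (Wilson, or the fact that simple groups are generated by their commutators together with a covering argument) that every element is a product of a bounded number of commutators, but then passing to a single left-normed $\g_k$-value is less direct, so I would prefer to cite Ore outright. Everything else is elementary bookkeeping about how commutators behave in direct products and the left-normed bracketing.
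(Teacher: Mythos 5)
Your proposal is correct and follows essentially the same route as the paper: invoke the Ore conjecture (Liebeck--O'Brien--Shalev--Tiep) to write every element as a commutator, handle the direct product componentwise, and then lengthen to a $\g_k$-value by repeatedly substituting a commutator expression for the first entry of the left-normed commutator. Your momentary worry about the bracketing is unfounded, since by the left-normed convention $[[b,c],a_2,\dots,a_{k-1}]$ \emph{is} the $\g_k$-value $[b,c,a_2,\dots,a_{k-1}]$, which is exactly the substitution the paper performs.
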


\begin{proof}
  By the Ore conjecture proved in \cite{ore}, every element of a finite simple non-abelian group is a commutator.
   The same is true for any element $(g_1,\dots ,g_s)$ of a direct product $G=S_1\times\dots \times S_s$ of  non-abelian simple groups $S_i$: if $g_i=[a_i,b_i]$ for $a_i,b_i\in S_i$, then $(g_1,\dots ,g_s)=\big[(a_1,\dots ,a_s),\,(b_1,\dots ,b_s)\big]$. Then for any $g\in G$ we have $g=[a,b]$, where we can substitute $a=[c,d]$ for some $c,d\in G$, so $g=[c,d,b]$, and so on, next substituting  $c=[e,f]$, so $g=[e,f,d,b]$, etc.
\end{proof}

\begin{lemma}\label{l2}
Let $k$ be a positive integer.
Suppose that a finite group $G$ is a direct product of non-abelian simple groups. If all $\g_k$-values in $G$ have right Engel sinks of cardinality at most $m$, then the order of $G$ is $m$-bounded.
\end{lemma}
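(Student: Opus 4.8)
The plan is to reduce the statement at once to the already established Theorem~\ref{t-old-finite}. The point is that, in this special situation, the hypothesis on $\g_k$-values is actually a hypothesis on \emph{all} elements of $G$. Indeed, since $G$ is a direct product of non-abelian simple groups, Lemma~\ref{l1} shows that every element of $G$ is a $\g_k$-value; hence every $g\in G$ has a right Engel sink ${\mathscr R}(g)$ with $|{\mathscr R}(g)|\leq m$. This is precisely the hypothesis of Theorem~\ref{t-old-finite}, which therefore yields a normal subgroup $N\trianglelefteq G$ of $m$-bounded order such that $G/N$ is nilpotent.

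It then remains only to observe that $N$ must be the whole of $G$. Here I would use that $G$, being a direct product of non-abelian simple groups, is perfect, so that the quotient $G/N$ is perfect as well; a perfect nilpotent group is trivial, and hence $G=N$ and $|G|=|N|$ is $m$-bounded, as claimed. (Alternatively one could invoke the description of the normal subgroups of a direct product of non-abelian simple groups as subproducts $\prod_{i\in I}S_i$, together with the fact that such a subproduct has trivial centre unless it is trivial; but the perfectness argument is shorter.)

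I do not expect any genuine obstacle in this lemma: it is essentially a corollary of Lemma~\ref{l1} combined with Theorem~\ref{t-old-finite}, the only point needing a word of justification being the elementary remark that a perfect nilpotent group is trivial. The substantive work of the section will come afterwards, when this bound on the semisimple part has to be propagated to an arbitrary finite group satisfying the hypotheses.
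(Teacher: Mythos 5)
Your proposal is correct and follows essentially the same route as the paper: Lemma~\ref{l1} upgrades the hypothesis to all elements, and Theorem~\ref{t-old-finite} then bounds $|G|$. The only difference is that you spell out the (correct) final observation that $G$, being perfect, has no non-trivial nilpotent quotient, a step the paper leaves implicit.
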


\begin{proof}
By Lemma~\ref{l1} the hypothesis means that every element of $G$ has a right Engel sink of cardinality at most $m$. Then $G$ has $m$-bounded order by Theorem~\ref{t-old-finite}.
\end{proof}

\begin{proof}[Proof of Theorem~\ref{t-finite}]
Recall that $G$ is a finite group in which every $\g_k$-value has a right Engel sink of cardinality at most $m$. We need to prove that the quotient of $G$ by the Fitting subgroup has $m$-bounded order.

First we perform reduction to soluble groups.

\begin{lemma}\label{l-sol}
The quotient $G/S(G)$ by the soluble radical $S(G)$ has $m$-bounded order.
\end{lemma}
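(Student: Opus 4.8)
The plan is to invoke the structure theory of finite groups with trivial soluble radical together with Lemma~\ref{l2}. Write $\bar G=G/S(G)$. Since an extension of a soluble group by a soluble group is again soluble, the preimage in $G$ of any soluble normal subgroup of $\bar G$ is soluble, hence contained in $S(G)$; thus $S(\bar G)=1$. In particular $\bar G$ has no nontrivial abelian normal subgroup, so every minimal normal subgroup of $\bar G$ is a direct product of non-abelian simple groups, and therefore the socle $N=\mathrm{soc}(\bar G)$ is itself a direct product of non-abelian simple groups.

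Next I would check that $\bar G$ acts faithfully on $N$, that is, $C_{\bar G}(N)=1$. This centralizer is normal in $\bar G$; if it were nontrivial it would contain a minimal normal subgroup $M$ of $\bar G$, which by the previous paragraph is a direct product of non-abelian simple groups and is contained in $N$, whence $M\leq N\cap C_{\bar G}(N)=Z(N)=1$ --- a contradiction. Hence conjugation yields an embedding $\bar G\hookrightarrow \mathrm{Aut}(N)$.

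It remains to bound $|N|$. The hypothesis that every $\g_k$-value has a right Engel sink of cardinality at most $m$ is inherited by subgroups and quotients (the inheritance property recorded after Lemma~\ref{l-r-sink}), so it holds in the subgroup $N$ of $\bar G$. Since $N$ is a direct product of non-abelian simple groups, Lemma~\ref{l2} gives that $|N|$ is $m$-bounded. Then $|\mathrm{Aut}(N)|\leq |N|!$ is also $m$-bounded, and the embedding $\bar G\hookrightarrow\mathrm{Aut}(N)$ shows that $|G/S(G)|=|\bar G|$ is $m$-bounded, as required.

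The only genuine input here is Lemma~\ref{l2} (which itself rests on Theorem~\ref{t-old-finite} and the Ore conjecture); the remaining ingredients are routine facts about the socle and the generalized Fitting subgroup. The one point that needs a little care is that the hypothesis really does descend to $N\leq G/S(G)$, since $N$ is a subgroup of a quotient rather than of $G$ itself --- but $\g_k$-values in $N$ are images of $\g_k$-values in $G$, and passing to a quotient only shrinks a right Engel sink, which is precisely what is needed.
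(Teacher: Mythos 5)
Your proof is correct and follows essentially the same route as the paper: identify the socle of $G/S(G)$ as a direct product of non-abelian simple groups, bound its order via Lemma~\ref{l2} (using the inheritance of the hypothesis by subgroups of quotients), and embed $G/S(G)$ into the automorphism group of the socle via the triviality of its centralizer. The extra details you supply (that $S(G/S(G))=1$ and the centralizer argument) are exactly the routine facts the paper leaves implicit.
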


\begin{proof}
In $G/S(G)$, the socle $E$ is a direct product of non-abelian finite simple groups. By Lemma~\ref{l2} the order of $E$ is $m$-bounded. Since the centralizer of $E$ in $G/S(G)$ is trivial, the group $G/S(G)$ embeds into the automorphism group of $E$ and therefore also has $m$-bounded order.
\end{proof}

Thus, we can assume from the outset that the group $G$ is soluble. Let $F_2(G)$ be the second term of the Fitting series, the inverse image of the Fitting subgroup $F(G/F(G))$ of the quotient by the Fitting subgroup. Since $F_2(G)/F(G)$ contains its centralizer in $G/F(G)$, it suffices to prove that the order of $F_2(G)/F(G)$ is $m$-bounded. Therefore we can assume that $G=F_2(G)$, that is, that $G/F(G)$ is nilpotent.

Let $P$ be a Sylow $p$-subgroup  of $G/F(G)$. It is sufficient to prove that $|P|$ is $m$-bounded, for any $p$, because this would also mean that any prime divisor of $|G/F(G)|$ is $m$-bounded, so that $G/F(G)$ has $m$-bounded number of non-trivial Sylow subgroups. Since $F(G)$ is the Fitting subgroup of the inverse image of $P$, we can assume that $G/F(G)=P$. Furthermore, since $P$ acts faithfully on the Hall $p'$-subgroup of $F(G)$, we can pass to the quotient by the Sylow $p$-subgroup of $F(G)$ and assume that $F(G)$ is a $p'$-group and $P$ is a subgroup of $G$. By
Gasch\"utz's theorem \cite[Satz III.4.2]{hup}, the image of $F(G)$ in the quotient of
$G$ by the Frattini subgroup $\Phi(F(G))$ of $F(G)$ is the Fitting subgroup of the quotient $G/\Phi(F(G))$. Therefore we can also assume that $\Phi(F(G))=1$, so that $F(G)$ is a direct product of elementary abelian $q$-groups over various $q$. In particular, $P$ acts by coprime automorphisms on $F(G)$.

A bound in terms of $m$ for the order of $P$ will follow from bounds in terms of $m$ for the exponent and rank of $P$ by Lemma~\ref{l-e-r}.

\begin{lemma}\label{l-exp}
The exponent of $P$ is $m$-bounded.
\end{lemma}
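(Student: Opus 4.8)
The plan is to prove the sharper statement $\exp P\leq m$. Since $P$ is a $p$-group, it suffices to show that an arbitrary element $a\in P$ has order at most~$m$, so fix $a\in P$ of order $p^n$ with $n\geq1$. Recall that $P$ acts faithfully on $F(G)=\prod_q V_q$, where $q$ ranges over the primes different from~$p$ and each $V_q$ is elementary abelian. Then $a^{p^{n-1}}$ acts nontrivially on some~$V_q$, so $a$ induces on this $V_q$ an automorphism of order exactly~$p^n$. As $q\neq p$, Lemma~\ref{l-maschke} makes $V_q$ a semisimple $\F_q\langle a\rangle$-module, and since $p^n$ is a prime power, $a$ acts with order exactly $p^n$ on some irreducible submodule $W\leq V_q$. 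For such a~$W$ and every~$i$ with $p^n\nmid i$ one has $C_W(a^i)=1$: indeed $C_W(a^i)$ is an $\langle a\rangle$-submodule of~$W$, and if it were all of~$W$ then $a^i$ would act trivially there, contradicting that $a$ has order $p^n$ on~$W$. In particular $a-1$ is invertible on~$W$, so $W=[W,a]$, and Lemma~\ref{l3} applies to the action of~$\langle a\rangle$ on~$W$.

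Next I would work inside the subgroup $W\langle a\rangle\leq G$, which is legitimate since $W$ is $a$-invariant. By Lemma~\ref{l3}(b) every $v\in W$ is a $\g_k$-value in $W\langle a\rangle$, hence a $\g_k$-value of~$G$, so $\mathscr R(v)$ exists and $|\mathscr R(v)|\leq m$. Writing $W$ additively as an $\F_q\langle a\rangle$-module, a straightforward induction on~$t$ shows that $[v,{}_t(a^iw)]=(a^i-1)^tv$ for all $v,w\in W$ and all $t\geq1$, since $W$ is abelian. Now for $1\leq i\leq p^n-1$ the operator $a^i-1$ is invertible on the finite group~$W$, hence has a finite multiplicative order~$\ell_i$, so $[v,a^i]=(a^i-1)v=(a^i-1)^{\,1+\ell_i}v=[v,{}_{1+\ell_i}a^i]$; and $[v,a^0]=1=[v,{}_2(a^0)]$. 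By Lemma~\ref{l-r-sink} it follows that $[v,a^i]\in\mathscr R(v)$ for every~$i$ with $0\leq i\leq p^n-1$.

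It remains to observe that, for a fixed $v\neq1$, the $p^n$ elements $[v,a^i]$ with $0\leq i\leq p^n-1$ are pairwise distinct: if $[v,a^i]=[v,a^j]$ then $v^{a^i}=v^{a^j}$, so $v\in C_W(a^{i-j})$, and since $\lvert i-j\rvert<p^n$ this forces $i=j$ unless $v=1$. Hence $v\neq1$ furnishes $p^n$ distinct elements of $\mathscr R(v)$, whence $p^n\leq|\mathscr R(v)|\leq m$; as $a\in P$ was arbitrary, $\exp P\leq m$, and in particular the exponent of~$P$ is $m$-bounded, as required. I expect the two points needing care to be the reduction to an irreducible $\F_q\langle a\rangle$-submodule on which $a$ retains its full order $p^n$ and all of whose nontrivial powers act without nonzero fixed points, and the realization that a right Engel sink of~$v$ must already absorb the commutators $[v,{}_nx]$ for \emph{every} $x\in W\langle a\rangle$, not merely for $x=a$ --- it is exactly the choices $x=a^i$ that pump the $p^n$ distinct values $[v,a^i]$ into $\mathscr R(v)$. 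Both points are elementary, so I foresee no real obstacle.
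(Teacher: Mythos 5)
Your proof is correct, but it finishes the argument by a genuinely different route than the paper, and in fact yields a sharper bound. The common part is the setup: both arguments restrict a fixed $a\in P$ of order $p^n$ to a suitable elementary abelian $q$-section on which every nontrivial power of $a$ acts without fixed points, and both invoke Lemma~\ref{l3} to know that the elements of that section are $\g_k$-values with sinks of size at most $m$. The paper then works with the single element $x=a$: the orbits of $u\mapsto[u,a]$ are the sinks, so $(a-1)^{m!}=1$ as a linear map, and an eigenvalue analysis over $\overline{\F}_q$ (using $(\alpha^i-1)^{m!}=1$ for all $i$) bounds the multiplicative orders of eigenvalues and hence $|a|\leq m!$. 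You instead pass (via Maschke) to an irreducible summand $W$ on which $a$ keeps its full order $p^n$, so that irreducibility gives $C_W(a^i)=1$ for all $p^n\nmid i$, and then you use many test elements $x=a^i$: each $[v,a^i]$ is recurrent because $a^i-1$ is invertible of finite order, hence lies in $\mathscr R(v)$ by Lemma~\ref{l-r-sink}, and fixed-point-freeness makes the $p^n$ elements $[v,a^i]$, $0\leq i\leq p^n-1$, pairwise distinct, giving $p^n\leq m$ directly. This avoids the field extension and eigenvalue computation, gives $\exp P\leq m$ rather than $m!$, and is in spirit closer to the counting trick the paper uses later for the rank (Lemma~\ref{l-r}), where distinct elements $a_i$ pump distinct entries into one sink. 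A small simplification available to you: the Maschke/irreducibility step is not really needed, since on the paper's subgroup $V=[Q,a^{p^{n-1}}]$ (with $Q$ a $q$-subgroup on which $a^{p^{n-1}}$ acts nontrivially) one already has $C_V(a^i)=1$ for all $a^i\neq 1$, and your counting argument runs verbatim there.
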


\begin{proof}
Let $g\in P$ be an arbitrary element  of $P$, and let $|g|=p^k$. Let $Q$ be a $q$-subgroup of $F(G)$ on which $g^{p^{k-1}}$ acts non-trivially; recall that $Q$ is elementary abelian. Let $V=[Q, g^{p^{k-1}}]$; then $C_V(g^{p^{k-1}})=1$. Then  for any $g^i\ne 1$ we also have $C_V(g^{i})=1$, so that $V=[V,g^i]$.

We now regard $V$ as a right $\F_q\langle g\rangle$-module. By Lemma~\ref{l3}(a)  the linear transformation $g-1$  is an automorphism of $V$, and its orbits are the minimal right Engel sinks of elements of $V$ by Lemma~\ref{l3}(c). The sizes of these sinks are at most $m$, since  every element of $V$ is a $\g_k$-value by Lemma~\ref{l3}(b). We obtain that  $(g-1)^{m!}=1$. In other words, the linear transformation $g$ of $V$ satisfies the polynomial $(X-1)^{m!}-1$.

We extend the ground field of $V$ to the algebraic closure $\overline{\mathbb F}_q$, naturally considering $\widetilde V=V\otimes _{\F_q}\overline{\mathbb F}_q$ as an $\overline{\mathbb F}_q\langle g\rangle$-module. Since the order of $g$ is coprime to the characteristic  of the field, $g$ is diagonalizable  as a linear transformation of $\widetilde V$.

This linear transformation also satisfies the polynomial $(X-1)^{m!}-1$. If $\alpha$ is an eigenvalue of $g$, then $(\alpha -1)^{m!}=1$. Applying the same arguments  to the
powers of $g$ we obtain $(\alpha ^i-1)^{m!}=1$ for all $i$.
This means that all the values $\alpha ^i-1$ are $m!$-th roots of unity in $\overline{\mathbb F}_q$. Therefore there are at most $m!$ different values of $\alpha ^i$. Hence the multiplicative order of $\alpha$ is at most $m!$. This is true for any eigenvalue of  $g$; hence, $|g|\leq m!$. Thus,  the exponent of $P$ is at most $m!$.
\end{proof}

\begin{lemma}  \label{l-r}
The rank of $P$ is $m$-bounded.
\end{lemma}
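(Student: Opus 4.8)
The plan is to reduce the bound on the rank of $P$ to a bound on the rank of the abelian sections of $P$ on which $P$ acts, and then to exploit the right Engel sink condition on the modules $V=[Q,g]$ exactly as in the proof of Lemma~\ref{l-exp}, but now squeezing out information about dimensions rather than eigenvalue orders. Recall the standing setup: $G=F_2(G)$, $F(G)$ is a $p'$-group which is a direct product of elementary abelian $q$-groups, $\Phi(F(G))=1$, and $P=G/F(G)$ is a $p$-group acting faithfully and coprimely on $F(G)$. Since $P$ acts faithfully on $F(G)$, and $F(G)=\prod_q F_q$ with each $F_q$ elementary abelian, $P$ embeds into $\prod_q \mathrm{Aut}(F_q)=\prod_q \mathrm{GL}(d_q,q)$. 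By Lemma~\ref{l-gmh} it suffices to bound the rank of each $F_q$ on which $P$ acts non-trivially — or rather, since $P$ need not act irreducibly, to bound the rank of $P$ directly one passes through the action on a single elementary abelian chief factor.

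First I would reduce to the case where $F(G)$ is a $q$-group, i.e.\ $F(G)=V$ is elementary abelian for a single prime $q$: $P$ acts faithfully on the direct product over $q$, and if each factor gives an $m$-bounded rank for the image of $P$, then — using that the number of primes $q$ that occur is itself $m$-bounded, which follows from Lemma~\ref{l-exp}-style arguments or can be arranged by a separate reduction — the rank of $P$ is $m$-bounded; alternatively, and more cleanly, one picks a single irreducible $\F_q P$-submodule $W$ of $V$ and notes that the kernel of the action of $P$ on $W$ is handled inductively. So assume $P$ acts faithfully and irreducibly on an elementary abelian $q$-group $V$. Now the key point: for each $g\in P$, writing $V=C_V(g)\oplus[V,g]$ (Maschke, Lemma~\ref{l-maschke} / Lemma~\ref{l-nakr}(c)), the analysis in Lemma~\ref{l-exp} shows that $g$ acts on $[V,g]$ as a transformation whose nontrivial orbits on vectors all have size $\le m$ (by Lemma~\ref{l3}, every vector of $[V,g]$ is a $\gamma_k$-value in $V\langle g\rangle$), hence $(g-1)^{m!}=1$ on $[V,g]$; passing to $\overline{\F}_q$, $g$ is diagonalizable with eigenvalues of multiplicative order $\le m!$. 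Thus every element of $P$ has order $\le m!$ — already known — but moreover the Brauer character, equivalently the eigenvalue multiset, of $g$ on $V$ takes values in a fixed finite set of algebraic integers depending only on $m$.

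The heart of the argument is then to bound $\dim_{\F_q}V$, equivalently the rank of $P$'s faithful irreducible module, in terms of $m$. I would argue: since $P$ has exponent dividing $m!$, and $P$ is a $p$-group, $p\le m!$ and $p^{\,v_p(m!)}$ bounds the exponent; by Lemma~\ref{l-e-r} it therefore suffices to bound the rank of $P$, which is circular, so instead I bound $\dim V$ directly. Consider the bilinear-form / centralizer count: for $g\in P$, $\dim C_V(g) = \dim V - \dim[V,g]$, and on $[V,g]$ all eigenvalues $\alpha$ satisfy $\alpha\ne 1$ and $\alpha^{m!}=$ (root of unity of order $\le m!$), so the number of distinct eigenvalues appearing across all of $P$ is $m$-bounded. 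One then uses a theorem bounding the order of a linear group all of whose elements have $m$-bounded order — this is where Zelmanov's solution of the restricted Burnside problem enters, or more elementarily the Hall--Higman / Lubotzky--Mann circle of ideas: a $p$-group of exponent dividing $m!$ acting faithfully and irreducibly on a module over a field of characteristic $q\ne p$ has $m$-bounded order, because its image in $\mathrm{GL}(V)$ generates a linear group of $m$-bounded exponent, which by Zelmanov has $m$-bounded order in each rank, and Lemma~\ref{l-gmh} plus the faithful irreducible action pins the rank down. Concretely: $P$ of exponent dividing $m!$ has rank bounded by its exponent times a function of... — the clean statement is that a finite $p$-group that embeds in $\mathrm{GL}(d,q)$ with $p\nmid q$ and has exponent $e$ has order bounded in terms of $e$ and $d$ is the wrong direction; rather, I invoke that the minimal faithful $\F_q$-representation of a $p$-group of rank $r$ has dimension $\ge$ something, so a faithful module forces $\mathrm{rank}(P)\le \dim V$, and Lemma~\ref{l-gmh} run the other way bounds $\mathrm{rank}(P)$ by a function of $\mathrm{rank}$ of the module only if...

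**The main obstacle.** The genuinely hard step is bounding the rank (equivalently the order) of the $p$-group $P$ of $m$-bounded exponent acting faithfully and coprimely on $F(G)$: exponent alone does not bound a $p$-group, so one must use that $P$ is \emph{linear} of bounded exponent, i.e.\ invoke Zelmanov's positive solution of the restricted Burnside problem (a residually finite group of bounded exponent, or a linear group of bounded exponent, has bounded order in bounded rank) together with Lemma~\ref{l-gmh} to convert the faithful action into a rank bound. I expect the actual proof in the paper to run: show $\exp P \le m!$ (done in Lemma~\ref{l-exp}); show that on each irreducible constituent $V$ the eigenvalues of every $g$ lie in an $m$-bounded set, deduce via a Brauer-character argument that $\dim V$, hence the order of $P/C_P(V)$, is $m$-bounded, sum over the $m$-boundedly many constituents (their number bounded because $\exp P$ and the number of primes $q$ are bounded), and conclude $|P|$, hence $\mathrm{rank}(P)$, is $m$-bounded. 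Combined with Lemma~\ref{l-exp} and Lemma~\ref{l-e-r}, this finishes Theorem~\ref{t-finite}.
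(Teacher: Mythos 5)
Your plan has a genuine gap at its central step. After extracting that every $g\in P$ has order at most $m!$ and that its eigenvalues on $[V,g]$ lie in an $m$-bounded set, you try to conclude that a $p$-group of $m$-bounded exponent acting faithfully (even irreducibly) in coprime characteristic has $m$-bounded order or rank, via Brauer characters or Zelmanov. This is false and cannot be repaired from exponent and eigenvalue data alone: an extraspecial group of exponent $p$ and order $p^{2n+1}$ acts faithfully and irreducibly in coprime characteristic with all eigenvalues $p$-th roots of unity, yet has unbounded rank; likewise an elementary abelian $p$-group acting diagonally on a direct sum, each generator moving only one summand, has every element of order $p$ and eigenvalues of bounded order but arbitrary rank. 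Zelmanov's restricted Burnside theorem bounds order in terms of exponent \emph{and the number of generators}, i.e.\ exactly the rank you are trying to bound, so that route is circular (as you half-acknowledge). The element-by-element consequences of Lemma~\ref{l-exp} simply do not see the rank; the sink hypothesis must be used again, and in a different way.

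The missing idea is to apply the hypothesis to a single $\gamma_k$-value assembled from many independent components at once. The paper first passes to a maximal normal abelian subgroup $A$ of $P$: since $C_P(A)=A$, Lemma~\ref{l-gmh} bounds the rank of $P/A$ in terms of the rank of $A$, so it suffices to bound the rank of $A$. By the generalized Maschke theorem (Lemma~\ref{l-maschke}), $F(G)$ is a direct product of minimal $A$-invariant subgroups; choose a minimal family $V_1,\dots,V_s$ such that $A$ acts faithfully on $V_1\times\dots\times V_s$, so each $A/C_A(V_i)$ is cyclic and the rank of $A$ is at most $s$. By minimality one can pick a nontrivial $a_i\in\bigcap_{j\ne i}C_A(V_j)$ and a nontrivial $v_i\in[V_i,a_i]$; by Lemma~\ref{l3} each $v_i$ is a $\gamma_k$-value in $V_i\langle a_i\rangle$ and all $[v_i,{}_na_i]$ are nontrivial elements of $V_i$. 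The product $w=v_1\cdots v_s$ is again a $\gamma_k$-value, and $[w,{}_na_i]=[v_i,{}_na_i]$, so $\mathscr R(w)$ contains a nontrivial element of each $V_i$; these lie in distinct direct factors, whence $s\le|\mathscr R(w)|\le m$ and the rank of $A$ is at most $m$. This product-element trick, not any bound on irreducible module dimensions, is what makes the lemma work, and it is absent from your proposal.
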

\begin{proof}
Let $A$ be a maximal normal abelian subgroup of $P$. Since $C_P(A)=A$, the quotient $P/A$ embeds in the automorphism group of $A$. The rank of $P/A$ is bounded in terms of the rank of $A$ by Lemma~\ref{l-gmh}. Therefore it suffices to obtain a bound for  the rank of $A$. 

Since $A$ acts coprimely on the abelian group $F(G)$, the latter is a direct product of minimal $A$-invariant subgroups by the generalized Maschke theorem (Lemma~\ref{l-maschke}). Choose minimal possible number of minimal $A$-invariant subgroups $V_1, \dots ,V_s$  of $F(G)$ such that $A$ acts faithfully on $V_1\times \dots \times V_s$ . Each $V_i$ can be regarded as an irreducible $\F_{q_i}A$-module for various (not necessarily different) primes $q_i$. The quotients $A/C_A(V_i)$ are cyclic because $A$ is abelian. It is sufficient to prove that $s$ is $m$-bounded, since $A=A/\bigcap_i C_A(V_i)$ embeds in the direct product of the $s$ cyclic groups $A/C_A(V_i)$.

By the minimality of $s$, for each $i$ the intersection $\bigcap_{j\ne i} C_A(V_j)$ is non-trivial, and we can choose an element $a_i\in \bigcap_{j\ne i} C_A(V_j)$ of prime order $p$. Then $[V_i,a_i]\ne 1$, and all elements of  $[V_i,a_i]$ are $\g_k$-values in $V_i\langle a_i\rangle$ by Lemma~\ref{l3}(b). Choose any non-trivial $v_i\in [V_i,a_i]$ for every $i=1,\dots ,s$. By Lemma~\ref{l3}(c) all commutators $[v_i,{}_na_i]$ are non-trivial elements of $\mathscr R(v_i)$ for all $n$.

It is easy to see that the product $w=v_1\cdots v_s$ is also a $\g_k$-value in $G$. Clearly, $[w,{}_na_i]=[v_i,{}_na_i]\in V_i$ for any $i$ and $n$. Therefore all these elements belong to $\mathscr R(w)$. Since $|\mathscr R(w)|\leq m$, it follows that, in particular, $s\leq m$, and the lemma is proved.
\end{proof}

By Lemma~\ref{l-e-r} the bounds in terms of $m$  for the exponent and rank of $P$ obtained in Lemmas~\ref{l-exp} and \ref{l-r} imply that the order of $P$ is $m$-bounded, which completes the proof of Theorem~\ref{t-finite}.
\end{proof}

\section{Profinite groups}\label{s-pf}
Recall that profinite groups are topological Hausdorff groups. Unless stated otherwise, a subgroup of a topological group will always mean a closed subgroup, all homomorphisms will be continuous, and quotients will be by closed normal subgroups. The same adage applies to taking commutator subgroups, normal closures, subgroups generated by subsets, etc., so, say, a subgroup generated by a subset $X$ will mean a subgroup generated by $X$ as a topological group. We refer the reader to the books \cite{wil} and \cite{ri-za} for the theory of profinite groups.

In this section we prove Theorem~\ref{t-compact} for profinite groups. 

\begin{theorem}\label{t-profinite}
Let $k$ be a positive integer. If every $\g_k$-value in a profinite group $G$ has a finite  right Engel sink, then $G$ has an open normal locally nilpotent subgroup.
\end{theorem}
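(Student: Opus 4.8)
The plan is to reduce the profinite case to the finite case (Theorem~\ref{t-finite}) by a compactness/inverse-limit argument, combined with the local nilpotency criterion of Wilson--Zelmanov for profinite Engel groups. First I would observe that a profinite group $G$ is the inverse limit of its finite continuous quotients $G/N$, where $N$ runs over open normal subgroups. The hypothesis on $\g_k$-values passes to each quotient $G/N$, but a priori only gives that each $\g_k$-value in $G/N$ has a \emph{finite} sink, with no uniform bound on the sizes; so Theorem~\ref{t-finite} does not apply directly to the quotients. This is the main obstacle, and the way around it is not to bound sink sizes uniformly but to exploit the qualitative conclusion: I would try to show that in $G$ every $\g_k$-value is a right Engel element, and then promote this to local nilpotency of a suitable open subgroup.

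The key steps, in order, are as follows. (1) Fix a $\g_k$-value $g=[g_1,\dots,g_k]$ and an arbitrary $x\in G$. The set $\{[g,{}_n x]\mid n\geq 1\}$ has finite closure contained in the finite sink $\mathscr R(g)$, so by Lemma~\ref{l-r-sink} (applied in $G$, or by a pigeonhole argument on the sequence of commutators landing in the finite sink) there exist $n\geq 1$ and $m\geq 1$ with $[g,{}_n x]=[g,{}_{n+m}x]$; writing $z=[g,{}_n x]$, the element $z$ lies in $\mathscr R(g)$ and the map $u\mapsto [u,x]$ permutes the finite sink. (2) The deeper input is that $G$ should be an Engel-type group on a large subgroup. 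I would argue that the set of $\g_k$-values generates a closed subgroup $H=\g_k(G)$ (the $k$-th term of the lower central series), that $G/H$ is nilpotent of class $<k$, and that it suffices to prove $H$ has an open normal locally nilpotent subgroup; indeed if $H_0\leq H$ is open, normal in $G$, and locally nilpotent, one checks that the hypothesis forces local nilpotency of an open subgroup of $G$ as well, since $G/H_0$ is virtually nilpotent and one can intersect with the preimage under control of the sinks. (3) On $H$, I would show every element is a right Engel element: every element of $H$ is a product of $\g_k$-values (and in the relevant quotients, by Lemmas~\ref{l1} and~\ref{l3}, even a single $\g_k$-value), and finiteness of sinks together with the permutation action from step (1) forces, via Lemma~\ref{l-c-s} and a coprime-action analysis as in the proof of Lemma~\ref{l-exp}, that the sink of each such element is trivial in every finite quotient, hence $[g,{}_n x]=1$ for $n$ large. (4) Once $H$ is an Engel profinite group, the Wilson--Zelmanov theorem \cite{wi-ze} gives that $H$ is locally nilpotent; being closed of finite index relationship with $G$ via $G/H$ nilpotent, $G$ then has an open normal locally nilpotent subgroup.

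I expect the main difficulty to be step (3): showing that finiteness (without a uniform bound) of the sink of a $\g_k$-value actually forces the sink to be trivial. In the finite setting of Theorem~\ref{t-finite} one had a cardinality bound $m$ to play with; here I would instead run the argument quotient by quotient. In a fixed finite quotient $\bar G=G/N$ the sink $\mathscr R(\bar g)$ is finite of some size $m_N$, so Theorem~\ref{t-finite} yields a normal nilpotent subgroup of $\bar G$ of $m_N$-bounded index; the point is to show these nilpotent subgroups are coherent across the inverse system, i.e. that $N$ can always be chosen inside a fixed open subgroup on which the $\g_k$-values act as right Engel elements. Concretely, I would argue that for each $x$ the finiteness of the sink, applied in $G$ itself rather than in quotients, already yields (by Heineken's Lemma~\ref{l-hei} after passing to $g^{-1}$ as a left Engel element, plus the structure lemmas) that $g$ normalizes a fixed open subgroup modulo which $[g,{}_n x]=1$; patching over the (compact) set of $x$ and over all $\g_k$-values $g$ then produces the desired open normal locally nilpotent subgroup. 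The verification that these patching steps are compatible with the profinite topology — that the relevant subgroups are closed and the intersections remain open — is the technical heart of the proof.
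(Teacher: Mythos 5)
Your proposal has a genuine gap at its core. The central claim of step (3) --- that every $\g_k$-value of $G$ (equivalently, every element of $H=\g_k(G)$) can be shown to be a right Engel element, so that Wilson--Zelmanov applies --- is simply false under the hypotheses. The paper's own example refutes it: let $G=T\langle\alpha\rangle$ with $T$ a Cartesian power of the group of order $3$ and $\alpha$ the inverting automorphism. Every non-trivial $v\in T$ is a $\g_k$-value for every $k$, and $[v,{}_n\alpha]=v\neq 1$ for all $n$, so ${\mathscr R}(v)=\{v\}$ is finite but not trivial and $v$ is not right Engel. Finiteness of sinks forces triviality of sinks only when the ambient group is pronilpotent (this is exactly Lemma~\ref{l-p-n}); in general, the most one can extract from a finite sink is, via Heineken's Lemma~\ref{l-hei} and Baer's theorem, that $[N_x,x]$ is pronilpotent for a suitable open $N_x$, i.e.\ that $\g_k$-values become FC-elements modulo the pronilpotent radical $K$ --- which is how the paper proceeds (Lemma~\ref{l-pf1}, quoting Detomi--Morigi--Shumyatsky to make $G/K$ virtually nilpotent). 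Your step (2) reduction to $H=\g_k(G)$ also does not work as stated: $G/H$ being nilpotent does not make $H$ open, and (locally nilpotent)-by-nilpotent profinite groups need not be virtually locally nilpotent (e.g.\ $\mathbb{Z}_p\rtimes\mathbb{Z}_p$ with an action by a unit of infinite multiplicative order), so the phrase ``one checks that the hypothesis forces local nilpotency of an open subgroup of $G$'' is precisely the point where the hypothesis must be exploited again, and no argument is offered.

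You correctly identify the uniformity problem --- Theorem~\ref{t-finite} cannot be applied to finite quotients because the sink sizes $m_N$ are not bounded --- but your proposed ``patching'' supplies no mechanism for resolving it, and since the Engel route is blocked, this is fatal. The paper's mechanism is quite different: after reducing modulo $K$ (using Lemma~\ref{l-pf1}, a maximal normal abelian subgroup, and Frattini quotients) to the case of a metabelian $G$, it proves that the sets $R_t$ of $k$-tuples whose $\g_k$-value has sink of size at most $t$ are closed (Lemma~\ref{l-closed}), applies the Baire category theorem to find an open subgroup $U$ and cosets $a_1U,\dots,a_kU$ on which the sink sizes are uniformly at most $m$, and then a commutator calculation in the metabelian group (Lemma~\ref{l-key}) removes the translating elements $a_i$ one at a time, giving the uniform bound $m^{3^k}$ for all $\g_k$-values of $U$; only then does Theorem~\ref{t-finite} apply to the finite quotients of $U$, yielding a pronilpotent subgroup of finite index, which is locally nilpotent by Lemma~\ref{l-p-n}. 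Neither the Baire category step nor any substitute for it appears in your proposal, so even apart from the false Engel claim the argument cannot be completed as outlined.
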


 Recall that pronilpotent groups are defined to be inverse limits of finite nilpotent groups. Every profinite group has the largest normal pronilpotent subgroup, which is a closed subgroup being the intersection of all centralizers of all chief factors of all finite quotients. It is natural to call the largest normal pronilpotent subgroup  the \emph{pronilpotent radical}. The next lemma states that pronilpotent groups satisfying the hypothesis of Theorem~\ref{t-profinite} are locally nilpotent. Therefore the conclusion of the theorem for profinite groups can be stated as finiteness of the index of the pronilpotent radical, which will coincide with the Hirsch--Plotkin radical (which will therefore  be shown to be closed in this case).

\begin{lemma}\label{l-p-n}
Let $k$ be a positive integer.
If $G$ is a pronilpotent group in which all $\g_k$-values have finite right Engel sinks, then $G$ is locally nilpotent.
\end{lemma}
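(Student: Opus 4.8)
The plan is to reduce the pronilpotent case to two essentially independent pieces: the pro-$p$ case and the pro-(finite abelian) ``diagonal'' case, exploiting that a pronilpotent group is the Cartesian product of its Sylow subgroups. Since local nilpotency is detected on finitely generated subgroups, and every finitely generated subgroup of a pronilpotent group lies in a product of finitely many of its Sylow subgroups, it suffices to show: (i) each Sylow pro-$p$ subgroup $G_p$ is locally nilpotent; and (ii) for finitely many distinct primes, the product $G_{p_1}\cdots G_{p_r}$ is locally nilpotent. Part (ii) is immediate once (i) is known, because in a pronilpotent group distinct Sylow subgroups commute elementwise, so a product of locally nilpotent Sylow subgroups is again locally nilpotent (a finitely generated subgroup is generated by finitely many elements, each lying in some $G_{p_i}$, and these pieces commute across primes). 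So the heart of the matter is (i): a pro-$p$ group in which all $\g_k$-values have finite right Engel sinks is locally nilpotent.

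For part (i) I would first invoke the strategy behind the Wilson--Zelmanov theorem \cite{wi-ze}: a profinite group is locally nilpotent if and only if it is an Engel group (for pro-$p$ groups this comes down to the associated Lie algebra being Engel, hence nilpotent by Zelmanov). So it is enough to prove that in a pro-$p$ group $G_p$ with the $\g_k$-sink hypothesis, every element is a (two-sided) Engel element — equivalently, every $\g_k$-value is a right Engel element, i.e.\ has trivial sink, combined with a descent argument on the commutator weight. The key reduction is that in a pro-$p$ group a \emph{finite} right Engel sink of a $\g_k$-value must be trivial. Indeed, for $g$ a $\g_k$-value and any $x\in G_p$, the set $\{[g,{}_nx]\mid n\ge 1\}$ lies eventually in the finite set $\mathscr R(g)$; passing to a finite quotient $G_p/N$, the image $\bar g$ has a sink of size dividing $|\mathscr R(g)|$, but in a finite $p$-group a pigeonhole/periodicity argument (as in Lemma~\ref{l-r-sink}) forces $[\bar g,{}_n\bar x]$ to be a $p$-element whose sink under the ``$x$-action'' is a single orbit; since this orbit has $p$-power size and size $\le m$ for a fixed $m=|\mathscr R(g)|$ independent of $N$, the orbit becomes trivial in all sufficiently small quotients, whence $[g,{}_nx]=1$ eventually. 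Thus $\mathscr R(g)=\{1\}$ for every $\g_k$-value $g$, i.e.\ every $\g_k$-value is a right Engel element, and by Lemma~\ref{l-hei} its inverse is a left Engel element.

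From there I would argue that \emph{every} element of $G_p$ — not just $\g_k$-values — is an Engel element. This is the delicate step and where I expect the main obstacle. The idea is a weight-reduction induction: a $\g_{k-1}$-value $h$ need not itself be Engel a priori, but for every $x$ the commutator $[h,x]$ is a $\g_k$-value hence right Engel; one then shows that an element all of whose ``first commutators with arbitrary $x$'' are right Engel is itself left Engel (or that the relevant iterated commutators stabilize), peeling off one commutator at a time until one reaches arbitrary elements $g_1$. Controlling the bookkeeping here — making sure the Engel length functions compose correctly and that the property survives the inverse limit — is the technical crux; one likely needs to work in finite quotients, use that each finite quotient of a pro-$p$ group is nilpotent precisely when it satisfies a bounded Engel condition, and then take the limit. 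Once every element of $G_p$ is both a left and a right Engel element, $G_p$ is an Engel pro-$p$ group, and Wilson--Zelmanov (or directly Zelmanov's theorem on Engel pro-$p$ groups) gives that $G_p$ is locally nilpotent, completing (i) and hence the lemma.
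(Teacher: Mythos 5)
Your proposal has two genuine gaps, and both occur at the places where the actual argument should be doing its work. First, the reduction to Sylow subgroups is unsound. A finitely generated subgroup of a pronilpotent group need \emph{not} lie in a product of finitely many Sylow subgroups (an element may have non-trivial components at infinitely many primes), and local nilpotency of all the Sylow subgroups does not imply local nilpotency of their Cartesian product: if $G_p$ is a two-generator finite $p$-group of class $c_p$ with $c_p\to\infty$, every $G_p$ is nilpotent, yet the diagonal two-generator subgroup of $\prod_p G_p$ is not. (Such a group fails the sink hypothesis, but that only shows the hypothesis must be exploited on $G$ as a whole, which your prime-by-prime scheme never does.) The reduction is also unnecessary, since the Wilson--Zelmanov theorem applies to arbitrary profinite Engel groups, not only to pro-$p$ groups.

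Second, the step you yourself flag as the ``delicate'' crux --- passing from ``every $\g_k$-value is right Engel'' to ``every element of $G$ is Engel'' --- is left unproved, and the weight-reduction induction you sketch is not needed: for any $g,y\in G$ the commutator $[g,{}_{k-1}y]$ is itself a $\g_k$-value, so $[[g,{}_{k-1}y],{}_m y]=1$ for some $m$, that is, $[g,{}_{k+m-1}y]=1$; hence $G$ is an Engel profinite group and Wilson--Zelmanov concludes. Likewise, the triviality of the sinks is cleaner than your orbit-size argument in finite quotients: in a pronilpotent group the open normal subgroups $N$ with $G/N$ nilpotent form a base of neighbourhoods of the identity, and ${\mathscr R}(x)\subseteq N$ for every such $N$ because $G/N$ is nilpotent; since ${\mathscr R}(x)$ is finite and $G$ is Hausdorff, some such $N$ meets ${\mathscr R}(x)$ trivially, forcing ${\mathscr R}(x)=\{1\}$. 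This is exactly how the paper argues, working with the whole pronilpotent group at once rather than with its Sylow subgroups.
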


\begin{proof}
For any $\g_k$-value $x$, since ${\mathscr R}(x)$ is finite, there is
an open normal subgroup $N$ with nilpotent quotient $G/N$ such that ${\mathscr R}(x )\cap N=\{1\}$. On the other hand, ${\mathscr R}(x )\subset N$, since $G/N$ is nilpotent. Thus, ${\mathscr R}(x )=\{1\}$ for every $\g_k$-value $x$, which means that $x $ is a right Engel element. It immediately follows that every element  $g\in G$ is right Engel: for any $y\in G$ the commutator $[g,{}_{k-1}y]$ is a  $\g_k$-value, so that $1=[[g,{}_{k-1}y],{}_my]=[g,{}_{k+m-1}y]$ for some $m$. This means that $G$ is a  profinite Engel group. Then $G$ is locally nilpotent by the Wilson--Zelmanov theorem \cite[Theorem~5]{wi-ze}.
\end{proof}

By Lemma~\ref{l-p-n}, in the proof of Theorem~\ref{t-profinite} we only need to show that the pronilpotent radical has finite index. The first step is to prove that the quotient by the pronilpotent radical has an open nilpotent subgroup.

\begin{lemma}\label{l-pf1}
Let $k$ be a positive integer.
If $G$ is a profinite group in which all $\g_k$-values have finite right Engel sinks, then the quotient $G/K$ by the pronilpotent radical $K$ has an open nilpotent subgroup.
\end{lemma}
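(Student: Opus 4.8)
The plan is to reduce to the finite case via an inverse limit argument, applying Theorem~\ref{t-finite} and Theorem~\ref{t-old-finite} along the way. First I would pass to the quotient $\bar G = G/K$, which again satisfies the hypothesis on $\g_k$-values; here the pronilpotent radical is trivial. Writing $\bar G$ as an inverse limit of finite quotients $\bar G/N_i$, I want to bound the structure of these quotients uniformly. The issue is that the $\g_k$-values in $\bar G$ have finite but not \emph{uniformly} bounded right Engel sinks, so Theorem~\ref{t-finite} does not apply directly to the finite quotients. However, compactness of $\bar G$ and finiteness of each $\mathscr R(x)$ should yield, for each $\g_k$-value $x$, an open normal subgroup modulo which $x$ becomes a right Engel element; the hard part is making a single choice that works for all $\g_k$-values simultaneously.

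The key step I would pursue is the following. Since $\bar G$ has trivial pronilpotent radical, I would show that $\bar G$ has a non-trivial \emph{closed} subgroup that is a Cartesian product of finite non-abelian simple groups sitting inside the generalized Fitting-type structure — more precisely, by Lemma~\ref{l-sol} (applied to finite quotients) the quotients $\bar G/N_i$ have soluble radical of $m_i$-bounded index only if the sinks are bounded, which they are not. So instead I would argue directly at the level of $\bar G$: the socle-type layer consisting of minimal closed normal subgroups that are non-abelian. For each such finite simple direct factor $S$, every element of $S$ is a $\g_k$-value by Lemma~\ref{l1}, hence has a finite right Engel sink; by Theorem~\ref{t-old-finite} applied inside $S$ this forces $|S|$ to be bounded by a function of $|\mathscr R|$ — and a compactness/uniformity argument (using that the sink sizes, though varying, are attained on a compact set, or using that only finitely many isomorphism types can occur in an inverse limit before the bound kicks in) should show that in fact $\bar G$ has \emph{no} infinite such layer, i.e.\ the relevant part is finite. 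Combined with the fact that, modulo the pronilpotent radical, the generalized Fitting subgroup of each finite quotient is centralized by nothing outside itself, this gives that $\bar G$ has a normal subgroup $M$ which is an inverse limit of direct products of finitely many bounded simple groups, with $\bar G/M$ embedding into $\mathrm{Aut}(M)$.

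From there I would handle the part of $\bar G$ lying above the simple layer and below a pronilpotent quotient: this is a pronilpotent-by-(bounded) situation, and the nilpotent-by-abelian analysis in the proof of Theorem~\ref{t-finite} (Lemmas~\ref{l-exp} and \ref{l-r}, using Lemma~\ref{l3} and the rank bounds of Lemmas~\ref{l-gmh} and \ref{l-e-r}) carries over to each finite quotient to show that a Sylow-type structure acting on the pronilpotent radical is of bounded exponent and rank, hence finite — yielding an open nilpotent subgroup of $\bar G/K = G/K$. The main obstacle I anticipate is precisely the passage from ``finite sinks'' to ``uniformly bounded sinks'': one cannot simply invoke Theorem~\ref{t-finite} in the finite quotients because the bound $m$ is not given a priori. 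The resolution I would aim for is that, \emph{because} the pronilpotent radical has been factored out, the only obstruction to local nilpotency (by Lemma~\ref{l-p-n}) lives in a ``semisimple'' and a ``coprime-action'' part, and in each of those parts the finiteness of individual sinks already forces finiteness of the part itself — for the semisimple part via Theorem~\ref{t-old-finite} and Lemma~\ref{l2}, and for the coprime part via the eigenvalue argument of Lemma~\ref{l-exp} together with the rank bound of Lemma~\ref{l-r}, both of which only need each relevant sink to be finite in order to produce a finite bound on exponent and rank of the acting group.
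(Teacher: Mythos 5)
There is a genuine gap, and it sits exactly where you yourself flag ``the hard part''. Your plan for both the semisimple layer and the coprime-action part ultimately runs quotient-by-quotient through finite images, using only the \emph{finiteness} of individual sinks. But every finite group trivially satisfies ``all $\g_k$-values have finite right Engel sinks'', so no argument applied to the finite quotients can extract anything unless it feeds in a \emph{uniform} bound; and Lemmas~\ref{l-exp} and \ref{l-r} produce bounds on exponent and rank only \emph{in terms of the uniform bound $m$} — with merely finite sinks they give, in each finite quotient, a bound depending on that quotient, which is useless in the inverse limit. (Otherwise one would conclude that \emph{every} profinite group is nilpotent-by-finite modulo its pronilpotent radical, which is false, e.g.\ for an infinite Cartesian product of non-abelian finite simple groups.) The same defect affects your treatment of the simple layer: Theorem~\ref{t-old-finite} bounds $|S|$ only in terms of a uniform $m$, and the ``compactness/uniformity argument'' you invoke to rule out an infinite layer is precisely the missing content. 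In the paper this uniformity is indeed obtained by a Baire category argument, but only at a \emph{later} stage of the proof of Theorem~\ref{t-profinite} (after reduction to a metabelian situation); it is not available at the level of Lemma~\ref{l-pf1} and cannot be assumed there. A further structural problem: a profinite group with trivial pronilpotent radical need not have any minimal closed normal subgroups at all (inverse limits of iterated wreath products of non-abelian finite simple groups have none), so the ``socle-type layer'' your reduction is built around may simply not exist. Finally, your phrase ``an open normal subgroup modulo which $x$ becomes a right Engel element'' is not quite right: choosing $N_x$ with ${\mathscr R}(x)\cap N_x=\{1\}$ makes $x$ right Engel \emph{in the subgroup} $N_x\langle x\rangle$, not in the quotient $G/N_x$.

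The paper's proof of Lemma~\ref{l-pf1} is much softer and avoids uniformity altogether. Starting from your (correct) first observation in its subgroup form — $x$ is right Engel in $N_x\langle x\rangle$ — Heineken's lemma (Lemma~\ref{l-hei}) makes $x^{-1}$ left Engel there, and Baer's theorem in the finite quotients shows that $[N_x,x]$ is pronilpotent. Since $N_x$ is open, the normal closure of $[N_x,x]$ in $G$ is a product of finitely many pronilpotent normal subgroups of $N_x$, hence pronilpotent, hence contained in $K$. Therefore in $G/K$ every $\g_k$-value is centralized by a subgroup of finite index, i.e.\ has finitely many conjugates, and the result of Detomi, Morigi and Shumyatsky \cite{det-mor-shu} for profinite groups whose $\g_k$-values are FC-elements yields the open nilpotent subgroup. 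That conversion of ``right Engel on an open subgroup'' into ``$[N_x,x]\leq K$'', together with the citation of the FC-type theorem, is the idea your proposal is missing.
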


\begin{proof}
For every $\g_k$-value $x$ we choose an open normal subgroup $N_x $ such that ${\mathscr R}(x )\cap N_x =\{1\}$. Then $x $ is a right Engel element in $N_x \langle x \rangle$, and therefore, $x ^{-1}$ is a left Engel element in $N_x \langle x \rangle$ by Lemma~\ref{l-hei}. By Baer's theorem \cite[Satz~III.6.15]{hup}, in every finite quotient of $N_x \langle x \rangle$ the image of $x ^{-1}$ belongs to the Fitting subgroup. As a result, the subgroup $[N_x , x ^{-1}]=[N_x , x ]$ is pro\-nil\-po\-tent.

Let $\widetilde N_x $ be the normal closure of $[N_x , x ]$ in $G$. Since $[N_x , x ]$ is normal in the subgroup $N_x $ of finite index, $[N_x , x ]$ has only finitely many conjugates, so $\widetilde N_x $ is a product of finitely many normal subgroups of $N_x $, each of which is pro\-nil\-po\-tent.
Hence,  $\widetilde N_x $ is pro\-nil\-po\-tent. Therefore all the subgroups $\widetilde N_x $ are contained in the pro\-nil\-po\-tent radical~$K$.

The image of every $\g_k$-value $x$ in the quotient $G/K$ is centralized by the image of $N_x $, which has finite index in $G$. Hence every $\g_k$-value in the quotient $G/K$ has only finitely many conjugates. A~profinite group with this property has an open nilpotent subgroup by a result of Detomi, Morigi, and Shumyatsky \cite[Proposition~3.4]{det-mor-shu}.
\end{proof}

\begin{proof}[Proof of Theorem~\ref{t-profinite}] Recall that $G$ is a profinite group in which  $\g_k$-values have finite right Engel sinks, and we need to show that $G$ has an open locally nilpotent normal subgroup. By Lemma~\ref{l-p-n} this is equivalent to showing that the quotient $G/K$ by the pronilpotent radical $K$   is finite.

Throughout what follows we can assume that $k\geq 2$, since for $k=1$ an even stronger result was proved in \cite[Theorem~1.1]{khu-shu172}.

Since $G/K$ has an open nil\-po\-tent subgroup  index by Lemma~\ref{l-pf1}, we can assume from the outset that $G/K$ is nil\-po\-tent. Let $A$ be a subgroup of $G$ such that $A/K$ is a maximal normal abelian subgroup of $G/K$ (which is automatically closed). Since $A/K$ contains its centralizer in $G/A$, the quotient $G/A$ embeds in the automorphism group of $A/K$. Therefore it is sufficient to show that $A/K$ is finite. The subgroup $K$ is also the pronilpotent radical of $A$. Therefore we can simply replace $G$ with $A$ and assume from the outset that $G/K$ is abelian.

Recall that both $K$ and $G/K$ are Cartesian products of their Sylow subgroups. Let $K=\prod_{i}Q_i$, where $Q_i$ is the Sylow $q_i$ subgroup of $K$. For any prime $p$, the Sylow $p$-subgroup of $G/K$ acts faithfully on the Hall $p'$-subgroup of $K$, which is the Cartesian product of the pro-$q_i$ groups $Q_i$ for $q_i\ne p$. An element of the Sylow  $p$-subgroup of $G/K$ acting nontrivially on $Q_i$ also acts nontrivially on the Frattini quotient $Q_i/\Phi (Q_i)$; this follows from the same property in Lemma~\ref{l-nakr}(d) for the finite quotients of $G$. Therefore the image of $K$ in the quotient $G/\prod_{i}\Phi (Q_i)$ by the Cartesian product of the Frattini subgroups is the pronilpotent radical of this quotient. Therefore we can assume that $\prod_{i}\Phi (Q_i)=1$, so that $K$ is an abelian group.

Our aim now is reduction to the case where the cardinalities of right Engel sinks of $\g_k$-values are uniformly bounded. This will enable us to finish the proof by applying Theorem~\ref{t-finite} on finite groups. First we introduce closed subsets of the direct product  of $k$ copies of $G$ with the product topology.

\begin{lemma}\label{l-closed}
Let $G$ be a profinite group in which all $\g_k$-values have finite right Engel sinks, and let $t$ be a positive integer. Then the  set
$$
R_{t}=\{(x_1,\dots,x_k)\in \underbrace{G\times \dots\times G}_k\mid |{\mathscr R}([x_1,\dots ,x_k])|\leq t\}
$$
is closed in $\underbrace{G\times \dots\times G}_k$ in the product topology.
\end{lemma}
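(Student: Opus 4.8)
The plan is to detect the condition $|\mathscr R([x_1,\dots,x_k])|\le t$ through the finite quotients of $G$ and then exhibit $R_t$ as an intersection of preimages of subsets of \emph{finite discrete} spaces, which are automatically closed. For an open normal subgroup $N$ of $G$ write $\pi_N\colon G\to G/N$ for the canonical projection and $\pi_N^{(k)}\colon G^k\to (G/N)^k$ for the induced map on $k$-tuples; this map is continuous, and since $G/N$ is finite, every subset of $(G/N)^k$ is clopen. Put
\[
T_N=\{(\bar y_1,\dots,\bar y_k)\in (G/N)^k\mid |\mathscr R([\bar y_1,\dots,\bar y_k])|\le t\}.
\]

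First I would record the easy inclusion $R_t\subseteq \bigcap_N (\pi_N^{(k)})^{-1}(T_N)$: if $g=[x_1,\dots,x_k]$ satisfies $|\mathscr R(g)|\le t$, then for every open normal $N$ the image $gN=[x_1N,\dots,x_kN]$ is a $\g_k$-value in the finite group $G/N$, and by the observation recorded above (a right Engel sink can only shrink under homomorphisms) we have $|\mathscr R(gN)|\le |\mathscr R(g)|\le t$, that is, $\pi_N^{(k)}(x_1,\dots,x_k)\in T_N$.

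The substance is the reverse inclusion. Suppose $\pi_N^{(k)}(x_1,\dots,x_k)\in T_N$ for every open normal $N$, and set $g=[x_1,\dots,x_k]$; I claim $|\mathscr R(g)|\le t$. If not, pick distinct $z_0,\dots,z_t\in \mathscr R(g)$. Since $G$ is Hausdorff, the intersection of all open normal subgroups is trivial, so there is an open normal $N_0$ with $z_iz_j^{-1}\notin N_0$ whenever $i\ne j$, i.e. $z_0N_0,\dots,z_tN_0$ are pairwise distinct in $G/N_0$. By the description \eqref{r-sink} of $\mathscr R(g)$ from Lemma~\ref{l-r-sink}, each $z_i$ equals $[g,{}_{n}y]=[g,{}_{n+m}y]$ for suitable $y\in G$ and $n,m\ge 1$; projecting to $G/N_0$ gives $z_iN_0=[gN_0,{}_{n}yN_0]=[gN_0,{}_{n+m}yN_0]$, so Lemma~\ref{l-r-sink} applied in $G/N_0$ (where $gN_0$, being a $\g_k$-value in a finite group, has a finite right Engel sink) yields $z_iN_0\in \mathscr R(gN_0)$. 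Hence $|\mathscr R(gN_0)|\ge t+1$, contradicting $\pi_{N_0}^{(k)}(x_1,\dots,x_k)\in T_{N_0}$.

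Combining the two inclusions, $R_t=\bigcap_N (\pi_N^{(k)})^{-1}(T_N)$, the intersection being over all open normal subgroups $N$ of $G$. Each $T_N$ is clopen in the finite discrete space $(G/N)^k$, so each $(\pi_N^{(k)})^{-1}(T_N)$ is clopen, and in particular closed, in $G^k$; therefore $R_t$ is closed. The only real obstacle here is the reverse inclusion — propagating a lower bound on the size of a right Engel sink down to a \emph{single} finite quotient — and this is precisely where the explicit characterization of $\mathscr R$ from Lemma~\ref{l-r-sink} together with Hausdorff separation is used; everything else is routine bookkeeping about continuity and finite quotients.
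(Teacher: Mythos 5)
Your proof is correct, and it reaches the conclusion by a route that is packaged differently from the paper's. The paper argues locally: it shows the complement of $R_t$ is open by fixing a tuple with $|{\mathscr R}([g_1,\dots,g_k])|\geq t+1$, separating $t+1$ sink elements $z_1,\dots,z_{t+1}$ by an open normal subgroup $N$, and then computing directly that every perturbed commutator $[g_1u_1,\dots,g_ku_k]$ with $u_i\in N$ has sink elements in each of the cosets $z_iN$, so the whole box $(g_1N,\dots,g_kN)$ lies in the complement. You argue globally, rewriting $R_t$ as $\bigcap_N \bigl(\pi_N^{(k)}\bigr)^{-1}(T_N)$ over all open normal subgroups $N$ and noting each pullback is clopen. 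Both proofs run on the same engine: the recurrence description of ${\mathscr R}$ from Lemma~\ref{l-r-sink} together with Hausdorff separation of $t+1$ distinct sink elements modulo a single open normal subgroup. The difference is where the stability on a neighbourhood comes from: the paper does a hands-on commutator computation with the perturbed tuple, using only the defining property of a sink for the new commutator (its long iterated commutators keep returning to the cosets $z_iN$), whereas you avoid any perturbation because membership in $T_N$ depends only on the image tuple in $G/N$; the price is that you invoke the full equality of Lemma~\ref{l-r-sink} inside the finite quotient, to conclude that the recurrent element $z_iN_0$ lies in ${\mathscr R}(gN_0)$ --- which is legitimate, since in a finite group every element has a finite sink. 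Your formulation also makes explicit the clean equivalence that $|{\mathscr R}(g)|\leq t$ if and only if $|{\mathscr R}(gN)|\leq t$ for every open normal $N$, and exhibits $R_t$ as an intersection of clopen sets, a marginally stronger statement than closedness; the paper's version gets the same conclusion with a slightly lighter use of the cited lemma.
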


\begin{proof} To lighten the notation, we set $G^{\times k}=\underbrace{G\times \dots\times G}_k$.  We wish to show equivalently that the complement of $R_k$ is an open subset of $G^{\times k}$. Every element $(g_1,\dots,g_k)\in (G^{\times k}\setminus R_k)$ is characterized by the fact that $|{\mathscr R}([g_1,\dots,g_k])|\geq t+1$. Let $z_1,z_2,\dots ,z_{t+1}$ be some $t+1$ distinct elements in ${\mathscr R}([g_1,\dots,g_k])$. Using Lemma~\ref{l-r-sink}  for every $z_i$, $i=1,\dots ,t+1$,   we can write for some $x_i\in G$ and some $m_i,n_i\in \N$
\begin{equation}\label{e-open}
z_i= [[g_1,\dots,g_k],{}_{m_i}x_i]=[[g_1,\dots,g_k],{}_{m_i+n_i}x_i], \quad \text{where }  m_i\geq 1,\;\;n_i\geq 1.
 \end{equation}
 Let $N$ be an open normal subgroup of $G$ such that the images of $z_1,z_2,\dots ,z_{t+1}$ are distinct elements in $G/N$.  For any $u_i\in N$, substituting elements $g_iu_i$ instead of $g_i$, $i=1,\dots ,k$, into equations \eqref{e-open} shows that the right Engel sink $ {\mathscr R}([g_1u_1,\dots,g_ku_k])$ contains an element in each of the $t+1$ cosets $z_iN$. Indeed, $[[g_1u_1,\dots,g_ku_k],{}_{m_i}x_i]=z_iv_i$ for some $v_i\in N$ and $[z_i,{}_{n_i}x_i]=z_i$. Then $$
 [[[g_1u_1,\dots,g_ku_k],{}_{m_i}x_i],{}_{sn_i}x_i]=[z_iv_i, ,{}_{sn_i}x_i]\in z_iN\qquad \text{for any } s\in \N.$$

 Thus, all elements in the coset $(g_1N,\dots ,g_kN)$ are contained in $G^{\times k}\setminus R_k$. We have shown that every element of $G^{\times k}\setminus R_k$ has a neighbourhood that is also contained in $G^{\times k}\setminus R_k$, which is therefore an open subset of $G^{\times k}$.
\end{proof}

We return to the proof of Theorem~\ref{t-profinite}. The hypothesis  means that
$$
\underbrace{G\times \dots\times G}_k=\bigcup_{i}R_i,
$$
where the subsets $R_i$ are closed by Lemma~\ref{l-closed}. By the Baire category theorem \cite[Theorem~34]{kel}, one of these sets contains an open subset; that is, there is an open subgroup $U$ and cosets $a_1U,\dots, a_kU$ such that $(a_1U,\dots, a_kU)\subseteq R_{m}$ for some $m$. In other words,
\begin{equation}\label{e-cosets}
  | {\mathscr R}([a_1u_1,\dots,a_ku_k])|\leq m\qquad \text{for any } u_i\in U.
\end{equation}

We now prove a key lemma, which will enable us to use Theorem~\ref{t-finite} on  finite groups. Recall that we now assume that our group $G$ is metabelian and $k\geq 2$.

\begin{lemma}\label{l-key}
The cardinalities of the right Engel sinks of $\g_k$-values in $U$ are uniformly bounded in terms of $m$.
\end{lemma}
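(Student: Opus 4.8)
The plan is to bound $|\mathscr{R}([b_1,\dots,b_k])|$ for arbitrary $b_1,\dots,b_k\in U$ by \emph{peeling off} the coset representatives $a_1,\dots,a_k$ appearing in \eqref{e-cosets} one slot at a time, each peeling step costing only a bounded factor. Two elementary ingredients will be used throughout. First, since $G$ is metabelian, $G'$ is abelian and normal, so for $z_1,z_2,z\in G'$, $g\in G$ and $x\in G$ one has $[z_1z_2,{}_nx]=[z_1,{}_nx][z_2,{}_nx]$ and $[z^{-1},{}_nx]=[z,{}_nx]^{-1}$, and $\mathscr{R}(z^{g})=\mathscr{R}(z)^{g}$. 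Consequently, if $z_1,z_2\in G'$ have finite right Engel sinks then so does $z_1z_2$, with $\mathscr{R}(z_1z_2)\subseteq\mathscr{R}(z_1)\mathscr{R}(z_2)$, whence $|\mathscr{R}(z_1z_2)|\le|\mathscr{R}(z_1)|\cdot|\mathscr{R}(z_2)|$, while $|\mathscr{R}(z^{-1})|=|\mathscr{R}(z^{g})|=|\mathscr{R}(z)|$. Since $k\ge 2$, every $\g_k$-value of $G$ lies in $G'$, so these facts apply to all the commutators that occur below. Second, again since $G$ is metabelian, $[u^{b},c]=[u,c]^{b}$ for $u\in G'$, $b,c\in G$, and $[u_1u_2,c]=[u_1,c][u_2,c]$ for $u_1,u_2\in G'$; these, together with the standard commutator identities, yield, for all $b_1,\dots,b_{i-1},b_i\in G$ and $c_{i+1},\dots,c_k\in G$ (with the prefix $b_1,\dots,b_{i-1}$ understood as empty when $i=1$),
\begin{equation*}
[b_1,\dots,b_{i-1},a_ib_i,c_{i+1},\dots,c_k]=[b_1,\dots,b_{i-1},b_i,c_{i+1},\dots,c_k]\cdot[b_1,\dots,b_{i-1},a_i,c_{i+1},\dots,c_k]^{b_i}.
\end{equation*}

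With these in hand I would prove, by induction on $i=0,1,\dots,k$, the statement $P(i)$: for all $b_1,\dots,b_i\in U$ and all $u_{i+1},\dots,u_k\in U$,
\begin{equation*}
|\mathscr{R}([b_1,\dots,b_i,a_{i+1}u_{i+1},\dots,a_ku_k])|\le m^{2^{i}}.
\end{equation*}
The base case $P(0)$ is exactly \eqref{e-cosets}. For the inductive step, fix $b_1,\dots,b_i\in U$ and $u_{i+1},\dots,u_k\in U$, and apply the displayed identity with $c_j=a_ju_j$; solving for the left-hand entry of $P(i)$ rewrites $[b_1,\dots,b_i,a_{i+1}u_{i+1},\dots,a_ku_k]$ as the product of $[b_1,\dots,b_{i-1},a_ib_i,a_{i+1}u_{i+1},\dots,a_ku_k]$ and the inverse of $[b_1,\dots,b_{i-1},a_i,a_{i+1}u_{i+1},\dots,a_ku_k]^{b_i}$. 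Both of these are $\g_k$-values of the form controlled by $P(i-1)$ — the first via the substitution $u_i:=b_i$, the second via $u_i:=1$ (so that its $i$-th entry is $a_i=a_i\cdot 1$) — so each has a right Engel sink of cardinality at most $m^{2^{i-1}}$; conjugating by $b_i$ and inverting do not change this cardinality, and, by the subadditivity recorded above, multiplying two such elements at most multiplies the two bounds. Thus $|\mathscr{R}([b_1,\dots,b_i,a_{i+1}u_{i+1},\dots,a_ku_k])|\le m^{2^{i-1}}\cdot m^{2^{i-1}}=m^{2^{i}}$, which is $P(i)$. Taking $i=k$ gives the lemma, with the explicit bound $m^{2^{k}}$ (recall that $k$ is fixed throughout).

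I stress that the whole argument takes place inside $G$: the commutator identities are group identities, every commutator appearing is a $\g_k$-value of $G$ and so has a finite right Engel sink by hypothesis, and the subadditivity estimate uses only that $G'$ is abelian — so no passage to finite quotients is needed here (that comes later, when Theorem~\ref{t-finite} is applied). The step I expect to be the crux is precisely the choice of the inductive statement $P(i)$. The only commutators over which \eqref{e-cosets} gives control are those whose $j$-th entry lies in $a_jU$, so every invocation of \eqref{e-cosets}, and of the previous induction level, must substitute entries of the exact shape $a_ju_j$ with $u_j\in U$, and never, say, $a_j^{-1}$ or an entry in another coset of $U$; the metabelian splitting $[\dots,a_ib_i,\dots]=[\dots,b_i,\dots]\cdot[\dots,a_i,\dots]^{b_i}$ is exactly the tool that converts the entry $a_ib_i$ into the wanted entry $b_i$ while generating only the admissible entry $a_i$ (together with a harmless conjugation by $b_i$), and it is this that makes the slot-by-slot peeling possible with a bounded, namely $m^{2^{k}}$, loss.
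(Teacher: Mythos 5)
Your proof is correct, and while it follows the same overall skeleton as the paper's -- an induction that peels the coset representatives $a_i$ off the controlled commutators of \eqref{e-cosets} one slot at a time -- the execution of each peeling step is genuinely different. The paper expands the entry $a_{s+1}u_{s+1}$ via $[a,bc]=[a,c][a,b,c][a,b]$ (and, for $s=1$, via $[ab,c]=[a,c][a,c,b][b,c]$), producing \emph{three} factors, and then bounds the sink of the left-hand side by appending ${}_nx$, invoking Lemma~\ref{l-r-sink}, choosing $n$ large, and moving the stray $u$ past ${}_nx$ by metabelian identities; this yields $m^{3^s}$ at step $s$ and $m^{3^k}$ overall. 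You instead use the exact identity $[w,a_ib_i]=[w,b_i][w,a_i]^{b_i}$ (and its first-slot analogue for $i=1$, where commuting the two factors uses that $G'$ is abelian), giving only \emph{two} factors, and you replace the large-$n$ bookkeeping by a once-and-for-all calculus for sinks of elements of $G'$ in a metabelian group: $\mathscr{R}(z_1z_2)\subseteq\mathscr{R}(z_1)\mathscr{R}(z_2)$, $|\mathscr{R}(z^{-1})|=|\mathscr{R}(z)|$, $|\mathscr{R}(z^{g})|=|\mathscr{R}(z)|$, all of which are correct under the standing hypotheses ($G$ metabelian, $k\ge 2$, so all the relevant elements lie in $G'$ and have finite sinks). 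What your route buys is a cleaner and slightly stronger bound, $m^{2^{k}}$ instead of $m^{3^{k}}$, and a separation of the metabelian manipulations into a reusable lemma; what the paper's route buys is that it works entirely with the hypothesis and Lemma~\ref{l-r-sink} inline, without having to set up (or justify minimality properties in) an auxiliary sink calculus. Either bound is of course ample for the application of Theorem~\ref{t-finite} that follows.
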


\begin{proof}
We will be using the standard commutator formulae $[ab,c]=[a,c][a,c,b][b,c]$ and $[a, bc]=[a,c][a,b,c][a,b]$ that hold in any group, as well as the formulae $[a,b,c,d]=[a,b,d,c]$ and $[[a,b]^{-1},c]=[a,b,c]^{-1}$, which hold in any metabelian group. We also use the fact that any commutators commute in $G$, since $G$ is metabelian.

Starting with the property \eqref{e-cosets}, we now consecutively obtain uniform bounds in terms of $m$ for the sizes of the right Engel sinks of the commutators
\begin{equation}\label{e-s}
  [u_1,\dots,u_s,a_{s+1}u_{s+1},\dots, a_ku_k]
\end{equation}
obtained from $[a_1u_1,\dots,a_ku_k]$ in \eqref{e-cosets} by dropping some of the first factors $a_i$.  More precisely, we use induction on $s$ to prove that the right Engel sink of the commutator \eqref{e-s} has cardinality at most $m^{3^s}$. Although we can regard property \eqref{e-cosets} as the basis of induction with $i=0$, we also show the calculation for $s=1$, as this case is a little different from the general induction step.

For $s=1$, in order to deal with  ${\mathscr R}([u_1,a_2u_2,\dots,a_ku_k])$ , we  write
$$[a_1u_1,a_2u_2]=[a_1,a_2u_2]\cdot [a_1,a_2u_2, u_1]\cdot [u_1,a_2u_2],$$ whence
%\begin{equation}\label{e-s1}
\begin{align*}%\label{e-s1}
[u_1,a_2u_2,\dots ,a_ku_k]={}&[a_1u_1,\dots,a_ku_k]\\
&{}\cdot [a_1,a_2u_2,\dots,a_ku_k]^{-1}\notag\\
&{}\cdot [a_1,a_2u_2, \dots ,a_ku_k,u_1]^{-1},\notag
\end{align*}
where in the third factor on the right we moved $u_1$ to the right end using the fact that $G$ is metabelian and $k\geq 2$.
%\end{equation}
Then for any $x\in G$ and any $n\in \N$ we have
\begin{align}\label{e-s12}
 [u_1,a_2u_2,\dots ,a_ku_k,\,{}_{n}x]={}&[a_1u_1,\dots,a_ku_k,\,{}_{n}x]\\
&{}\cdot [a_1,a_2u_2,\dots,a_ku_k,\,{}_{n}x]^{-1}\notag\\
&{}\cdot [a_1,a_2u_2, \dots ,a_ku_k,\,{}_{n}x,\,u_1]^{-1},\notag
\end{align}
where in the third factor on the right we interchanged ${}_{n}x$ and $u_1$ using the fact that $G$ is metabelian and $k\geq 2$. In view of Lemma~\ref{l-r-sink}, all elements of  ${\mathscr R}([u_1,a_2u_2,\dots,a_ku_k])$ appear as values of the left-hand side of \eqref{e-s12} for sufficiently large $n$, which can also be chosen as large as required. We can consider values of $n$  that are also large enough for the first two factors on the right to belong to the right Engel sinks of the initial subcommutators of length $k$, as well as for the initial subcommutator of length $k+n$ of the third factor to belong to the right Engel sink of the initial subcommutator of length $k$. These three initial subcommutators of length $k$ on the right have the form of a commutator in~\eqref{e-cosets}. Therefore by property \eqref{e-cosets} for large enough $n$ there are at most $m$ values for each of the three factors on the right of \eqref{e-s12}. As a result, $|{\mathscr R}([u_1,a_2u_2,\dots,a_ku_k])|\leq m^3$, as claimed.

In the induction step we suppose that
\begin{equation}\label{e-inhyp}
   | {\mathscr R}([u_1,\dots,u_s,a_{s+1}u_{s+1},\dots, a_ku_k]) |\leq m^{3^s}\qquad \text{for any } u_i\in U.
\end{equation}
We now write
\begin{align*}
  [u_1,\dots , u_s,a_{s+1}u_{s+1}]  =&[u_1,\dots , u_s,a_{s+1}] \\
   & {}\cdot [u_1,\dots , u_s,u_{s+1}]\\
   & {}\cdot [u_1,\dots , u_s,a_{s+1}, u_{s+1}],
\end{align*}
 whence
%\begin{equation}\label{e-s1}
\begin{align*}%\label{e-ss}
[u_1,\dots , u_s,u_{s+1},a_{s+2}u_{s+2},\dots, a_ku_k]={}& [u_1,\dots , u_s,a_{s+1}u_{s+1},\dots,a_ku_k]\\
&{}\cdot [u_1,\dots , u_s,a_{s+1},a_{s+2}u_{s+2},\dots,a_ku_k]^{-1}\notag\\
&{}\cdot [u_1,\dots , u_s,a_{s+1},a_{s+2}u_{s+2}, \dots ,a_ku_k,u_{s+1}]^{-1},\notag
\end{align*}
where in the third factor on the right we moved $u_{s+1}$ to the right end using the fact that $G$ is metabelian and $k\geq 2$.
%\end{equation}
Then for any $x\in G$ and any $n\in \N$ we have
\begin{align}\label{e-ss2}
 [u_1,\dots , u_s,u_{s+1},a_{s+2}u_{s+2},\dots, a_ku_k,\,{}_{n}x]={}& [u_1,\dots , u_s,a_{s+1}u_{s+1},\dots,a_ku_k,\,{}_{n}x]\\
&{}\cdot [u_1,\dots , u_s,a_{s+1},a_{s+2}u_{s+2},\dots,a_ku_k,\,{}_{n}x]^{-1}\notag\\
&{}\cdot [u_1,\dots , u_s,a_{s+1},a_{s+2}u_{s+2}, \dots ,a_ku_k,\,{}_{n}x,u_{s+1}]^{-1},\notag
\end{align}
where in the third factor on the right we interchanged ${}_{n}x$ and $u_{s+1}$ using the fact that $G$ is metabelian and $k\geq 2$. In view of Lemma~\ref{l-r-sink}, all elements of
$${\mathscr R}( [u_1,\dots , u_s,u_{s+1},a_{s+2}u_{s+2},\dots, a_ku_k])$$ appear as values of the left-hand side of \eqref{e-ss2} for sufficiently large $n$, which can also be chosen as large as required. We can consider values of $n$  that are also large enough for the first two factors on the right to belong to the right Engel sinks of the initial subcommutators of length $k$, as well as for the initial subcommutator of length $k+n$ of the third factor to belong to the right Engel sink of the initial subcommutator of length $k$. These three initial subcommutators of length $k$ on the right have the form of a commutator in~\eqref{e-inhyp}. By the induction hypothesis \eqref{e-inhyp} for large enough $n$ there are at most $m^{3^s}$ values for each of the three factors on the right of \eqref{e-ss2}. As a result,
$$|{\mathscr R}( [u_1,\dots , u_s,u_{s+1},a_{s+2}u_{s+2},\dots, a_ku_k])|\leq (m^{3^s})^3=m^{3^{s+1}},
$$
 which completes the proof for $s+1$.

For $s=k$ we obtain in the end that $|{\mathscr R}( [u_1,\dots , u_k])|\leq m^{3^k}$ for any $u_i\in U$.
\end{proof}

We can now finish the proof of Theorem~\ref{t-profinite}. By Lemma~\ref{l-key}, in every finite quotient of $U$  the right Engel sinks of $\g_k$-values have cardinalities uniformly bounded in terms of $m$. By Theorem~\ref{t-finite} each of these finite quotients has a normal nilpotent subgroup of $m$-bounded index. Hence $U$ has a pronilpotent normal subgroup of finite $m$-bounded index. Since $U$ has finite index in $G$, it follows that the pronilpotent radical of $G$ has finite index. By Lemma~\ref{l-p-n} the pronilpotent radical is locally nilpotent.
\end{proof}

\medskip

\section{Compact groups}
Recall that compact groups are topological Hausdorff groups, and
subgroups, homomorphisms, etc. are understood in the topological sense. When considering ordinary subgroups or group homomorphisms, regardless of the topology, we use the adjective ``abstract''. We refer the reader to the books \cite{h-m} and \cite{h-r} for the theory of compact groups.

In this section we prove Theorem~\ref{t-compact} about compact groups in which all $\g_k$-values have finite right Engel sinks. We use the structure theorems for compact groups and the results of the preceding section on profinite groups with this property.

Recall that a group $H$ is said to be \emph{divisible} if for every $h\in H$ and every positive integer $k$ there is an element $x\in H$ such that $x^k=h$.

\begin{lemma}\label{l-div}
Suppose that $H$ is a divisible group in which all $\g_k$-values have finite right Engel sinks. Then for any $g,x\in H$ there is a positive integer $n(x,g)$ such that \begin{equation*}\label{e-free}
  [[[g,{}_{k-1}x],{}_nx],\,[g,{}_{k-1}x]]=1\qquad \text{for all }n\geq n(x,g).
\end{equation*}
\end{lemma}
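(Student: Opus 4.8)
The plan is to prove something slightly stronger, namely that the $\g_k$-value $c:=[g,{}_{k-1}x]$ centralizes the whole of its minimal right Engel sink $\mathscr R(c)$; the displayed identity then follows immediately, since $[c,{}_nx]\in\mathscr R(c)$ for all sufficiently large $n$.

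First I would record that $c=[g,{}_{k-1}x]$ is indeed a $\g_k$-value (a left-normed commutator with entries $g,x,\dots,x$), so by hypothesis it has a finite right Engel sink and hence a smallest one $\mathscr R(c)$; set $m:=|\mathscr R(c)|$. The crux is to exhibit an element of $C_H(c)$ whose $m!$-th power is $c$, and this is exactly where divisibility of $H$ is used: pick $w\in H$ with $w^{m!}=c$. Any element of a group commutes with its own powers, so $w$ commutes with $c=w^{m!}$, that is, $w\in C_H(c)$. Now Lemma~\ref{l-c-s} applies to $h=w$ and yields that $w^{m!}$ centralizes $\mathscr R(c)$; but $w^{m!}=c$, so $c$ itself centralizes $\mathscr R(c)$.

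It remains to translate this back. By the definition of a right Engel sink there is an integer $n(x,g)$ such that $[c,{}_nx]\in\mathscr R(c)$ whenever $n\ge n(x,g)$, and for every such $n$ we therefore get $[[c,{}_nx],c]=1$; substituting $c=[g,{}_{k-1}x]$ gives precisely the asserted identity. For $k=1$ one simply has $c=g$, and the same argument goes through unchanged.

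I do not expect a genuine obstacle here; the entire proof is the combination of Lemma~\ref{l-c-s} with the divisibility hypothesis. The only delicate point is to make sure that the chosen root $w$ actually lies in $C_H(c)$ — which is automatic precisely because $c$ is a power of $w$ — so that Lemma~\ref{l-c-s} may legitimately be invoked. It is perhaps worth noting that the argument establishes the stronger fact $\mathscr R(c)\subseteq C_H(c)$, and the statement of the lemma is then obtained by specializing to the sink elements $[c,{}_nx]$.
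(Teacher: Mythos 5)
Your proposal is correct and is essentially identical to the paper's own proof: both use divisibility to pick $w$ with $w^{m!}=[g,{}_{k-1}x]$, note that $w$ centralizes its power $[g,{}_{k-1}x]$, and apply Lemma~\ref{l-c-s} to conclude that $[g,{}_{k-1}x]$ centralizes ${\mathscr R}([g,{}_{k-1}x])$, hence centralizes the tail commutators $[[g,{}_{k-1}x],{}_nx]$. No gaps; the explicit remark that the root $w$ lies in the centralizer is exactly the point the paper uses implicitly.
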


\begin{proof}
In view of $[g,{}_{k-1}x]$ being a $\g_k$-value, let $|{\mathscr R}([g,{}_{k-1}x])|=m$. Let $h\in H$ be an element such that $h^{m!}=[g,{}_{k-1}x]$. Since $h$ centralizes $[g,{}_{k-1}x]$, by Lemma~\ref{l-c-s} we obtain that $[g,{}_{k-1}x]=h^{m!}$ centralizes ${\mathscr R}([g,{}_{k-1}x])$. By the definition of ${\mathscr R}([g,{}_{k-1}x])$, there is a positive integer $n(x,g)$ such that $[[g,{}_{k-1}x],{}_nx]\in {\mathscr R}([g,{}_{k-1}x])$ for all $n\geq n(x,g)$. By the above, $[[[g,{}_{k-1}x],{}_nx],\,[g,{}_{k-1}x]]=1$ for all $n\geq n(x,g)$.
\end{proof}

\begin{proof}[Proof of Theorem~\ref{t-compact}] Recall that $G$ is a compact group in which all $\g_k$-values have finite right Engel sinks, and we need to prove that $G$ has an open locally nilpotent subgroup.

By the well-known structure theorems (see, for example, \cite[Theorems~9.24 and 9.35]{h-m}), the connected component $G_0$ of the identity  in $G$ is a closed divisible subgroup such that $G_0/Z(G_0)$ is a Cartesian product of simple compact Lie groups, while the quotient $G/G_0$ is a profinite group.

By Theorem~\ref{t-profinite} the profinite group $G/G_0$ has an open locally nilpotent subgroup, the inverse image of which is also an open subgroup of $G$. Therefore the proof will be complete with the following proposition. 

\begin{proposition}\label{p-comp}
Suppose that $G$ is a compact group in which all $\g_k$-values have finite right Engel sinks and the quotient $G/G_0$ by the connected component $G_0$ of the identity is locally nilpotent. Then $G$ is  locally nilpotent.
\end{proposition}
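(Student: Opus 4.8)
The plan is to prove that $G_0$ is abelian and central in $G$, and then deduce local nilpotency.

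\smallskip

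\emph{Step 1: $G_0$ is abelian.} By the structure theorem already quoted, $G_0/Z(G_0)$ is a Cartesian product of simple compact Lie groups. If $G_0$ were nonabelian, one of these factors would be a nonabelian simple compact Lie group $S$; being a quotient of the closed subgroup $G_0\le G$, it inherits the hypothesis that all $\g_k$-values have finite right Engel sinks. By Got\^o's commutator theorem every element of $S$ is a commutator, and since $S=[S,S]$ one may iterate exactly as in Lemma~\ref{l1} to see that \emph{every} element of $S$ is a $\g_k$-value, hence has a finite right Engel sink. By \cite[Theorem~1.1]{khu-shu172} (the case $k=1$), $S$ has a finite normal subgroup $N$ with $S/N$ locally nilpotent; but $N$ is closed and $S$ has no proper nontrivial closed normal subgroup, so $N=\{1\}$ and $S$ is locally nilpotent. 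This is impossible, since a locally nilpotent connected compact group is abelian. Hence $G_0$ is abelian; in particular it is a projective limit of finite-dimensional tori.

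\smallskip

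\emph{Step 2: $G_0\le Z(G)$ (the heart).} Fix $x\in G$; I must show conjugation by $x$ is trivial on $G_0$. Since $G_0$ is a projective limit of finite-dimensional tori and $\overline{\langle x\rangle}$ is compact, the closed $\overline{\langle x\rangle}$-invariant subgroups $N\le G_0$ with $G_0/N$ a finite-dimensional torus form a cofinal family: given any closed $N_0$ with $G_0/N_0$ a torus, the $\overline{\langle x\rangle}$-orbit of the finitely generated subgroup $N_0^{\perp}\le\widehat{G_0}$ is finite (the pointwise stabiliser of a finite generating set is open in $\overline{\langle x\rangle}$), and intersecting the corresponding conjugates of $N_0$ yields such an $N\subseteq N_0$. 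Fix one such $N$. Then $N$ is normal in $\overline{\langle G_0,x\rangle}$, so $\overline{\langle G_0,x\rangle}/N$ is a quotient of a subgroup of $G$ and inherits the hypothesis; write $T=G_0/N$ and let $\theta\in\operatorname{Aut}(T)$ be induced by $x$. As $\operatorname{Aut}(T)\cong GL_n(\mathbb Z)$ is discrete and $\overline{\langle x\rangle}$ is compact, $\theta$ has finite order. For every $j\ge1$ and every $g\in T$, $[g,{}_{k-1}x^{j}]$ is a $\g_k$-value, hence has a finite right Engel sink; since $[g,{}_{m}x^{j}]=(\theta^{j}-1)^{m}g$ in the abelian group $T$, the set $\{(\theta^{j}-1)^{m}g:m\ge k-1\}$ is finite, so $(\theta^{j}-1)^{a}g=(\theta^{j}-1)^{b}g$ for some $a<b$. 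Taking $g$ with algebraically independent transcendental coordinates forces $(\theta^{j}-1)^{a}=(\theta^{j}-1)^{b}$ as integer matrices; as $\theta$ is diagonalisable, every eigenvalue of $\theta^{j}-1$ has absolute value in $\{0,1\}$, i.e. $|\zeta^{j}-1|\in\{0,1\}$ for every eigenvalue $\zeta$ of $\theta$ (a root of unity) and every $j\ge1$. But for any root of unity $\zeta\ne1$ some power $\zeta^{j}$ satisfies $|\zeta^{j}-1|\ge\sqrt3$; hence $\zeta=1$, all eigenvalues of $\theta$ equal $1$, and $\theta$ — unipotent of finite order — is the identity. Letting $N$ range over the cofinal family, $x$ centralises $G_0$; as $x$ was arbitrary, $G_0\le Z(G)$.

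\smallskip

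\emph{Step 3: conclusion.} By Step 2 every $\g_k$-value lying in $G_0$ is central, hence a right Engel element. For any $g,y\in G$ the commutators $[g,{}_{n}y]$ lie in $G_0$ for $n$ large (since $G/G_0$ is locally nilpotent), and being then central they satisfy $[g,{}_{n+1}y]=1$; thus, as in the proof of Lemma~\ref{l-p-n}, every element of $G$ is right Engel, hence left Engel by Lemma~\ref{l-hei}, so $G$ is an Engel compact group and is locally nilpotent by Medvedev's theorem \cite{med}. (Equivalently, since any closed topologically finitely generated $H\le G$ has $HG_0/G_0$ nilpotent of some class $c$ and $H\cap G_0\le Z(H)$, one gets $\g_{c+2}(H)=\{1\}$ directly.) The substance of the whole argument is Step 2: the finite sinks must be used simultaneously for all powers $x^{j}$ — an integer matrix $\theta$ of order $6$ with characteristic polynomial $\Phi_6$ has $\theta-1$ of order $3$ (so the $x$-orbits are finite), yet $\theta^{2}-1$ has eigenvalues of modulus $\sqrt3$, which is exactly the obstruction the hypothesis rules out — and one must make the passage to finite-dimensional torus quotients $\overline{\langle x\rangle}$-equivariant so as to run the eigenvalue argument in the style of Lemma~\ref{l-exp}; the diophantine fact $\sup_j|\zeta^{j}-1|\ge\sqrt3$ for roots of unity $\zeta\ne1$ and the cofinality of the invariant torus quotients are the technical points. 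The same scheme with Theorem~\ref{t-finite} in place of the $k=1$ result should give the bounded version (Corollary~\ref{c-em}).
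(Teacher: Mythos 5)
Your argument is correct, but it takes a genuinely different route from the paper's. The paper proves only that $G_0$ is abelian (Tits alternative plus divisibility, Lemma~\ref{l-div}), and then never shows centrality: it splits the analysis into the torsion part $T$ of $G_0$ and the $\Q$-vector space $V=G_0/T$, proves that elements of $V$ are right Engel by a $\Q$-linear eigenvalue argument (passing from $y$ to $y^2$, Lemma~\ref{l-qspace}), deduces that $G/T$ is locally nilpotent, handles $T$ by a delicate coprime-automorphism/divisible-subgroup argument (Lemma~\ref{l-torsion}), and concludes because Engel abelian-by-(locally nilpotent) groups are locally nilpotent \cite[12.3.3]{rob}. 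You instead prove the stronger statement $G_0\le Z(G)$ (not even using local nilpotency of $G/G_0$ until the last step): Pontryagin duality gives a cofinal family of $\overline{\langle x\rangle}$-invariant quotients $G_0/N$ that are finite-dimensional tori, the induced automorphism $\theta\in GL_n(\mathbb{Z})$ has finite order, and the sink condition applied to the $\g_k$-values $[g,{}_{k-1}x^j]$ for \emph{all} powers $x^j$, evaluated at an element with rationally independent coordinates, forces $\lvert\zeta^j-1\rvert\le 1$ for every eigenvalue $\zeta$ of $\theta$ and every $j$, hence $\zeta=1$ and $\theta=1$; then $G$ is Engel and Medvedev's theorem \cite{med} (or your direct $\g_{c+2}(H)=1$ computation) finishes. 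Your insistence on using all powers $x^j$ is exactly the same obstruction the paper meets in Lemma~\ref{l-qspace} when it passes from $y$ to $y^2$ (your order-$6$ companion-matrix example is the canonical instance). What your route buys is centrality of $G_0$ and the complete elimination of the torsion-part analysis of Lemma~\ref{l-torsion}; what it costs is reliance on Got\^o's commutator theorem together with \cite[Theorem~1.1]{khu-shu172} in Step 1 (where the paper uses Tits plus Lemma~\ref{l-div}) and on Medvedev's theorem at the end, plus the duality machinery.

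Two small points should be made explicit. First, the fact underlying both your cofinality claim and the finiteness of the order of $\theta$ — that the stabilizer in $\overline{\langle x\rangle}$ of each character of $G_0$ is open, so orbits of characters are finite — deserves a line of proof or a reference: it follows from joint continuity of conjugation and compactness of $G_0$, since a nontrivial character of $G_0$ cannot have image inside a small arc of the circle. Second, the assertion that a locally nilpotent nonabelian simple compact Lie group cannot exist should be justified (it contains a nonabelian free subgroup by the Tits alternative \cite{tits}, or simply a finite non-nilpotent subgroup), and finite normal subgroups of such a group are central, hence trivial, which is what your Step 1 needs. Neither point affects the validity of the argument.
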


\begin{proof} First we prove that $G_0$ is abelian.

 \begin{lemma}\label{l-abelian}
 The connected component $G_0$ of the identity  in $G$ is an abelian group.
 \end{lemma}

 \begin{proof}
 We  need to show that $G_0/Z(G_0)=1$. Otherwise $G_0/Z(G_0)$ has a  subgroup $H$ isomorphic to a simple compact Lie group. As a linear group in characteristic zero, $H$ satisfies the Tits Alternative \cite{tits}, by which it either contains a non-abelian free subgroup or has a soluble subgroup of finite index. The latter is impossible for simple compact Lie groups, so $H$ contains an abstract  non-abelian free subgroup $F=\langle a, b\rangle$. Since $H$ is a divisible group, by Lemma~\ref{l-div} there is a positive integer $n=n(a,b)$ such that $  [[[a,{}_{k-1}b],{}_nb],[a,{}_{k-1}b]]=1$. But this  is obviously false for the free generators $a,b$ of~$F$, a contradiction.
\end{proof}

We proceed with the proof of Proposition~\ref{p-comp}.

Let  $T$ be the torsion part of the divisible abelian group $G_0$, so that $T$ is an abstract normal  subgroup of $G$. We can regard the abstract quotient $V=G_0/T$ as a vector space over $\Q$, on which the group $G/G_0$ acts by linear transformations.

\begin{lemma}\label{l-qspace}
All elements of $V=G_0/T$ are right Engel in the abstract quotient group $G/T$.
\end{lemma}

\begin{proof}
Arguing by contradiction, suppose that an element $u\in V$ is not right Engel in $G/T$. Then there is $x\in G$ such that $[u,{}_{s}x]\ne 1$ for any $s\in \N$; in particular, the $\g_k$-value $[u,{}_{k-1}x]$ is also not right Engel. By hypothesis, then $\mathscr R([u,{}_{k-1}x])$ is non-trivial and finite. By Lemma~\ref{l-r-sink} there is $y\in G/T$ such that
$$
1\ne [[u,{}_{k-1}x],{}_{m}y]=[[u,{}_{k-1}x],{}_{m+n}y]
$$
for some positive integers $m,n$. For brevity, let $v=[[u,{}_{k-1}x],{}_{m}y]$. Regarding $V$ as a right $\Q (G/T)$-module, we see that the span $U$ of the elements $v=[v,{}_ny],\, [v,y],\dots , [v,{}_{n-1}y]$ is a $y$-invariant finite-dimensional subspace of $V$.

In the notation of  a right $\Q (G/T)$-module, commutators $[w,g]$ for $w\in V$, $g\in G$ are written as $w(g-1)$. Thus, $U$ is spanned by the elements $v=v(y-1)^n,\,  v(y-1),\dots , v(y-\nobreak 1)^{n-1}$. Clearly, $(y-1)^n=1$, which means that $y-1$ is a nonsingular linear transformation of $U$ of finite (multiplicative) order dividing $n$.  Then $y-1$ is diagonalizable over the algebraic closure $\overline{\mathbb Q}$ of $\Q$ as a linear transformation of $\widehat{ U}=U\otimes _{\Q}\overline{\mathbb Q}$. The  eigenvalues of $y-1$ on~$\widehat{ U}$ are roots of unity. But then $y^2-1$ will have eigenvalues of infinite multiplicative order on~$\widehat{U}$. Indeed,  if $\lambda$ is any eigenvalue of $y$, then  $\lambda -1=\zeta$ is an eigenvalue of $y-1$, which is a root of 1. Then $\lambda^2 =\zeta^2+2\zeta +1$, whence $\lambda^2-1=\zeta(\zeta+2)$. Here,  $\zeta +2$ cannot be a root of 1, unless $\zeta=-1$, but then $\lambda =\zeta+1=0$, which is impossible, since $y$ is nonsingular. It is also impossible to have $\zeta +2=0$.   Hence $\lambda^2-1$ has infinite multiplicative order. Therefore  $y^2-1$ is a nonsingular linear transformation of infinite multiplicative order on $\widehat{U}$, and therefore also on $U$.  Since $U$ is finite-dimensional,  there is $w\in U$ such that the vectors $w(y^2-1)^j$ are all different and non-zero. In terms of the group operations, this means that all commutators $[w,{}_jy^2]$ are different, and therefore the $\g_k$-value  $[w,{}_{k-1}y^2]$ does not have a finite right Engel sink, contrary to the hypothesis of the proposition.
\end{proof}

\begin{lemma}\label{l-ln2}
The abstract quotient group $G/T$ is locally nilpotent.
\end{lemma}

\begin{proof}
  The abstract quotient $G/T$ is an Engel group: for any $x,y\in G$, firstly $[x,{}_ly]\in G_0$ for some $l\in \N$, because $G/G_0$ is locally nilpotent by hypothesis, and then $[[x,{}_ly],{}_my]\in T$ for some $m\in \N$ by Lemma~\ref{l-qspace}. Thus, $G/T$ is an Engel group, which is abelian-by-(locally nilpotent), and such groups are known to be locally nilpotent (see, for example, \cite[12.3.3]{rob}).
\end{proof}

%Thus, $G/G_0$ acts on $V$ by nilpotent transformations. Note that it follows that if $g$ does not centralize some $u\in V$, then the orbit of $u$ in $V$ under $\langle g\rangle$ is infinite.

\begin{lemma}\label{l-torsion}
The abstract torsion part $T$ of $G_0$ consists of right Engel elements in $G$.
\end{lemma}

\begin{proof}
Since $T$ is contained in the abelian subgroup $G_0$, the action of $G$ by conjugation on $G_0$ factors through to the action of the profinite group $G/G_0$.
Suppose the opposite, that there is an element $a\in T$ that is not right Engel in $G$. Then there is $h\in G/G_0$ such that $[a,{}_{s}h]\ne 1$ for any $s\in \N$; in particular, the $\g_k$-value $[a,{}_{k-1}h]$ is also not right Engel. By hypothesis, then $\mathscr R([a,{}_{k-1}h])$ is non-trivial and finite. By Lemma~\ref{l-r-sink} there is $g\in G/G_0$ such that
$$
1\ne [[a,{}_{k-1}h],{}_{m}g]=[[a,{}_{k-1}h],{}_{m+n}g]
$$
for some positive integers $m,n$.

As an abstract group, $T$ is a direct product of its Sylow subgroups, each of which is an abstract normal subgroup of $G$.  Therefore there is a prime $p$ such that for the projection $b$ of $a$ onto the Sylow $p$-subgroup of $T$ we also have
$$
1\ne [[b,{}_{k-1}h],{}_{m}g]=[[b,{}_{k-1}h],{}_{m+n}g]
$$
for the same positive integers $m,n$. For brevity, let $v=[[b,{}_{k-1}h],{}_{m}g]$.

Let $B$ be the abstract subgroup generated by the elements $v=[v,{}_ng],\, [v,g],\dots , [v,{}_{n-1}g]$. This is a finite $g$-invariant $p$-subgroup such that $B=[B,g]$. Let $\psi$ denote the automorphism of $B$ induced by conjugation by $g$, so that also $B=[B,\psi]$. Then 
%The mapping  $x\mapsto [x,\psi ]$ is a homomorphism of $B$, since $B$ is abelian. This mapping is surjective because it permutes the generators of $B$; since $B$ is finite, it is also injective, which means that 
$C_B(\psi )=1$ by Lemma~\ref{l3}(a).

%Since $G/G_0$ is pronilpotent, it is the Cartesian product of its normal Sylow subgroups.

Since $B$ is finite, $\psi$ has finite order. Let $\langle \psi\rangle=\langle \alpha\rangle\times \langle \f\rangle$, where $\langle \alpha\rangle$ is the (possibly trivial) Sylow $p$-subgroup of $\langle \psi\rangle$, and $\f$ is a $p'$-automorphism of order $n$ with $p\nmid n$.  We observe that also $C_B(\f)=1$, for otherwise the $p$-automorphism $\alpha $ would have a non-trivial fixed point in $C_B(\f)$, which would also be in $C_B(\psi )$.

Since $B$ is finite, the normalizer $N_G(B)$ is closed and contains the closed procyclic subgroup  $\langle g\rangle$. Let $g=g_pg_{p'}$, where $g_p$ is an element of the Sylow $p$-subgroup of $\langle g\rangle$ of $G/G_0$ and $g_{p'}$ is an element of the Cartesian product of the other Sylow $q$-subgroups of $\langle g\rangle$ over primes $q\ne p$. Since $B$ is finite, the centralizer $C_{\langle g\rangle}(B)$  is a closed subgroup of finite index in $\langle g\rangle$, and $\f$ is the image of $g_{p'}$ in the quotient $\langle g\rangle/C_{\langle g\rangle}(B)$.

Changing notation, we assume from the outset that $B$ is a non-trivial $g$-invariant finite abelian $p$-subgroup of $T$, the closed procyclic subgroup  $\langle g\rangle$  of $G/G_0$ has trivial Sylow $p$-subgroup, and $\f$ is a $p'$-automorphism of $B$ of order $n$ induced by conjugation by $g$ such that $C_B(\f)=1$.

Recall that the abstract Sylow $p$-subgroup of $T$ is a divisible $p$-group, which is a direct product of copies of the quasicyclic group $C_{p^\infty}$. Therefore all the roots of all elements of $B$ in $G_0$ form an abstract  $g$-invariant divisible  $p$-subgroup $D$. Note that $D$ is also normalized by $\langle g\rangle$. The centralizer of each of the finite subgroups $\Omega _i(D)=\{u\in D\mid u^{p^i}=1\}$, $i=1,2,\dots $, is a closed subgroup. Therefore the quotient $\langle g\rangle/C_{\langle g\rangle}(\Omega _i(D))$ is a finite $p'$-group, and $g$ induces by conjugation a $p'$-automorphism of $\Omega _i(D)$.
By Lemma~\ref{l-nakr}(e)  the automorphism induced by $g$ on each subgroup $\Omega _i(D)$ has the same order $n$ as  on $\Omega_1(D)=\Omega_1(B)$ and as on $B$. Therefore $g$ induces an automorphism of the same order $n$ on $D$;  we denote this automorphism of $D$ also by $\f$. Since $C_B(\f)=1$, we have $C_D(\f )=1$. Hence,   $\Omega _i(D)=\{x^{-1}x^\f\mid x\in \Omega _i(D)\}$ for every $i$ by Lemma~\ref{l3}(b), and therefore $D=\{x^{-1}x^\f\mid x\in D\}$. Then
\begin{equation}\label{e-split}
dd^\f\cdots d^{\f^{n-1}}=x^{-1}x^\f (x^{-1}x^\f)^\f\cdots (x^{-1}x^\f)^{\f^{n-1}}=1\qquad \text{for all }d\in D.
\end{equation}

Now let $\,\overline{\!  D}$ be the closure of  $D$,  which is contained in $G_0$.  The element $g$ normalizes $\,\overline{\!  D}$ and induces by conjugation an automorphism of the same order $n$ as on $D$, since  $D$ is contained in the closed subgroup $C_G(g^n)$, which therefore also contains $\,\overline{\!  D}$. We denote this automorphism of $\,\overline{\!  D}$  by the same letter $\f$. The subgroup $\,\overline{\!  D}$  is compact  but not profinite; therefore it cannot be periodic (see \cite[Theorem~28.20]{h-r}).  Therefore we have the non-trivial abstract quotient $\,\overline{\!  D}/T(\,\overline{\!  D})\ne 1$, where $T(\,\overline{\!  D})=\,\overline{\!  D}\cap T$ is the abstract torsion part of $\,\overline{\!  D}$.

Since $\f$ is a continuous    automorphism of $\,\overline{\!  D}$, equation~\eqref{e-split} implies that the same property is enjoyed by $\,\overline{\!  D}$:
\begin{equation}\label{e-split2}
  aa^\f\cdots a^{\f^{n-1}}=1\qquad \text{for all }a\in \,\overline{\!  D}.
\end{equation}
It follows that $\f$ has no non-trivial fixed points in the abstract quotient $\,\overline{\!  D}/T(\,\overline{\!  D})$. Indeed,  by substituting any $c\in C_{\,\overline{\!  D}/T(\,\overline{\!  D})}(\f)$  into~\eqref{e-split2} we obtain $c^n=1$, whence $c=1$, since $\,\overline{\!  D}/T(\,\overline{\!  D})$ is torsion-free. In particular, $\f$ is a non-trivial automorphism of finite order of  the abstract quotient $\,\overline{\!  D}/T(\,\overline{\!  D})$.

Since the abstract group $G/T$ is locally nilpotent by Lemma~\ref{l-ln2}, the semidirect product $(\,\overline{\!  D}/T(\,\overline{\!  D})\langle \f\rangle$ is also locally nilpotent. The element of finite order $\f$ belongs to the torsion part of this group, which has trivial intersection with the normal subgroup $\,\overline{\!  D}/T(\,\overline{\!  D})$. Hence $\f$ centralizes $\,\overline{\!  D}/T(\,\overline{\!  D})$. This contradiction completes the proof. 
\end{proof}

We can now finish the proof of the proposition. Lemmas~\ref{l-ln2} and \ref{l-torsion} imply that $G$ is an Engel group. Indeed, 
for any $x,y\in G$, firstly $[x,{}_ly]\in  T$ for some $l\in \N$, since $G/T$ is locally nilpotent by Lemma~\ref{l-ln2}. Then  $[[x,{}_ly],{}_my]=1$ for some $m\in \N$ by Lemma~\ref{l-torsion}. Thus, $G$ is an Engel abelian-by-(locally nilpotent) group, and such groups are known to be locally nilpotent (see, for example, \cite[12.3.3]{rob}).

Proposition~\ref{p-comp} is proved.
\end{proof}

As explained above, Proposition~\ref{p-comp} completes  the proof of Theorem~\ref{t-compact}.
\end{proof}

\begin{proof}[Proof of Corollary~\ref{c-em}.]
Recall that here $k$ and $m$ are positive integers and  $G$ is a compact group in which every $\g_k$-value has a finite right Engel sink of cardinality at most $m$, and we need to show that $G$ has an open locally nilpotent subgroup of index bounded in terms of $m$. When $G$ is profinite, then $G$ has a pronilpotent open subgroup of $m$-bounded index, since every finite quotient has a nilpotent subgroup of $m$-bounded index by Theorem~\ref{t-finite}. By Lemma~\ref{l-p-n} a pronilpotent subgroup is locally nilpotent. Thus, for a compact group $G$, the quotient $G/G_0$ by the connected component of the identity has an open locally nilpotent subgroup of $m$-bounded index. The full inverse image of this subgroup is locally nilpotent by Proposition~\ref{p-comp}.
\end{proof}

\ed